\newlist{lista}{enumerate}{1}
\setlist[lista]{label=\alph*., nosep,leftmargin=*,align=right}
\newlist{listi}{enumerate}{1}
\setlist[listi]{label={\upshape(\roman*\upshape)},leftmargin=*,align=right, widest=iii,nosep, format=\bf}
\DeclareMathOperator*{\argminA}{arg\,min} 
\DeclareMathOperator*{\argmaxA}{arg\,max} 
\DeclareMathOperator*{\dist}{dist} 
\DeclareMathOperator*{\circum}{circ}
\def\re{\mathds R}
\def\na{\mathds N}
\newcommand{\norm}[1]{\left\lVert#1\right\rVert}
\newcommand{\scal}[2]{\left\langle{#1},{#2}  \right\rangle}
\def\lV{\left\lVert }
\def\rV{\right\rVert }
\DeclareMathOperator{\Id}{Id}
\DeclareMathOperator{\inte}{int}
\DeclareMathOperator{\bound}{bd}
\DeclareMathOperator{\diag}{diag}
\newcommand{\pCRMOp}{{\mathscr{C}}}
\newcommand{\CRMOp}{{\mathcal{C}}}
\theoremstyle{thmstyleone}%
\newtheorem{theorem}{Theorem}[section]
\newtheorem{lemma}[theorem]{Lemma}%
\newtheorem{corollary}[theorem]{Corollary}%
\theoremstyle{thmstyletwo}%
\newtheorem{remark}[theorem]{Remark}%
\theoremstyle{thmstylethree}%
\newtheorem{definition}[theorem]{Definition}%
\Crefname{lemma}{Lemma}{Lemmas}
\crefname{proposition}{Proposition}{Propositions}
\crefname{definition}{Definition}{Definitions}
\Crefname{proposition}{Proposition}{Propositions}
\Crefname{corollary}{Corollary}{Corollaries}
\numberwithin{equation}{section}
\begin{document}

\title[A successive centralized circumcentered-reflection method]{A successive centralized circumcentered-reflection method for the convex feasibility problem}


\author[1,4]{\fnm{Roger} {Behling}}\email{rogerbehling@gmail.com}

\author[2]{{Yunier} \sur{Bello-Cruz}} \email{yunierbello@niu.edu}
\author[1]{\fnm{Alfredo} \sur{Iusem}} \email{alfredo.iusem@fgv.br} 
\author*[3]{\fnm{Di} \sur{Liu}}  \email{di.liu@impa.br} 
\author[4]{\fnm{Luiz-Rafael} \sur{Santos}}\email{l.r.santos@ufsc.br}

\affil[1]{\orgdiv{School of Mathematics}, \orgname{Fundação Getúlio Vargas}, \orgaddress{\city{Rio de Janeiro}-\state{RJ} -- \postcode{22250-900}, \country{Brazil}}}

\affil[2]{\orgdiv{Department of Mathematical Sciences}, \orgname{Northern Illinois University}, \orgaddress{\city{DeKalb}-\state{IL} -- \postcode{60115-2828}, \country{USA}}}

\affil[3]{\orgname{Instituto de Matematica Pura e Aplicada}, \orgaddress{\city{Rio de Janeiro}-\state{RJ} -- \postcode{22460-320}, \country{Brazil}}}

\affil[4]{\orgdiv{Department of Mathematics}, \orgname{Federal University of Santa Catarina}, \orgaddress{\city{Blumenau}-\state{SC} -- \postcode{89065-300}, \country{Brazil}}}

\date{}

\abstract{
In this paper, we present a successive centralization process for the circumcentered-reflection scheme with several control sequences for solving the convex feasibility problem in Euclidean space. Assuming that a standard error bound holds, we prove the linear convergence of the method with the most violated constraint control sequence. Moreover, under additional smoothness assumptions on the target sets, we establish the superlinear convergence. Numerical experiments confirm the efficiency of our method.}

\keywords{Convex Feasibility Problem, Superlinear convergence,
Circumcentered-reflection method, Projection methods.}
\pacs[MSC Classification]{49M27, 65K05, 65B99, 90C25}

\jyear{2023}%

\maketitle


\section{Introduction}\label{sec:intro}
The convex feasibility problem (CFP) aims to solve:
\begin{equation}
\label{eq.CFP_1}
    \text{find }x^* \in  C\coloneqq \bigcap_{i=1}^m C_i, 
\end{equation}
where each $C_i \subset \re^n$ is closed and convex, for $i=1,2,\ldots,m$. Moreover, we assume that $C\neq \emptyset$. Convex feasibility represents a modeling paradigm for solving many engineering and physics problems, \emph{e.g.}, image recovery \cite{Combettes:1996},  wireless sensor networks localization \cite{Hu:2016}, and gene regulatory network inference \cite{Wang:2017}.

Based on the orthogonal projections, a broad class of methods is available for solving problem \cref{eq.CFP_1}; see, for instance, \cite{Bauschke:1996}. Two well-known algorithms among them are the Sequential Projection Method (SePM) and the Simultaneous Projection Method (SiPM), which only use the individual projections onto $C_i$'s, $P_{C_i}$. The projection operator for each $C_i$, $P_{C_i}:\re^n\rightarrow C_i$, is given by
\begin{equation}
\label{eq.definition_Projection}
    P_{C_i}(x) =\argminA_{s \in C_i} \| x-s\|.
\end{equation}
The SePM and the SiPM operators are defined as $\Bar{P}=P_{C_m}\circ \dots \circ P_{C_1}$ and $\hat{P}=\frac{1}{m}\sum_{i=1}^m P_{C_i}$, respectively. Given $s^0, y^0\in\re^n$, we set $s^{k+1} = \Bar{P}(s^k)$ and $y^{k+1} = \hat{P}(y^k)$ with $k\in\na$ the sequences generated by SePM and SiPM, respectively. These two iterations converge to a solution of problem \cref{eq.CFP_1} if $\bigcap_{i=1}^m C_i \neq \emptyset $. Moreover, it is well-known that under some error bound conditions, to be discussed later, SePM and SiPM have linear convergence rates. Further study of these two algorithms can be found in \cite{Bauschke:1996,DePierro:1985}.


In this paper we are going to use a  circumcentered-reflection scheme for solving problem \cref{eq.CFP_1}. The circumcentered-reflection  method (CRM) has been proposed in \cite{Behling:2018} to solve problem \cref{eq.CFP_1} with two closed convex sets $A,B\subset \re^n$.  CRM was first proposed to accelerate the Douglas-Rachford method (DRM)~\cite{Douglas:1956,Bauschke:2014b}, and Method of Alternating projections (MAP)~\cite{Bauschke:1993,Bauschke:2016}  (which coincides with SePM introduced above, if only two sets are considered).
During the past four years, CRM has fascinated researchers in the field of continuous optimization, resulting in an avalanche of surprising results and improvements for the circumcenter scheme;  see, for instance, \cite{Behling:2018a, Behling:2020, Behling:2021b, Behling:2021, Behling:2023, Araujo:2022, Arefidamghani:2021,Arefidamghani:2023, Bauschke:2018, Bauschke:2020, Bauschke:2021, Bauschke:2021b, Bauschke:2021d,Dizon:2022, Dizon:2022a, Lindstrom:2022,  Ouyang:2021a, Ouyang:2022b,Ouyang:2023}.

The circumcenter of three points $x,y,z\in \re^n$, noted as $\circum(x,y,z)$, is the point in $\re^n$ that lies in the affine space defined by $x,y$ and $z$ and is equidistant to these three points. CRM iterates by means of the operator $\CRMOp_{A,B}$ with respect to $A,B$ defined as 
\begin{equation}
    \label{eq.definition_CRMop}
    \CRMOp_{A,B}(x)=\circum(x,R_A(x),R_B(R_A(x)),
\end{equation}
where $R_A = 2P_A - \Id $ and $R_B = 2P_B-\Id$, and $\Id$ is the identity operator in $\re^n$.

One of the limitations of CRM is that its convergence theory requires one of the sets to be a linear manifold. A counter-example for which CRM does not converge for two general convex sets was found in \cite{AragonArtacho:2020}. It is worth noting that CRM can be used for solving the CFP with $m$ general arbitrary closed convex sets by using Pierra's product space reformulation \cite{Pierra:1984}. This method is called \emph{CRM-Prod}, which is briefly described next. Define $\mathbf{W}\coloneqq C_1 \times C_2 \times \cdots \times C_m \subset \re^{nm}$ and $\mathbf{D}\coloneqq \{(x,x,\ldots,x) \in \re^{nm}\mid x\in \re^n\}$. One can easily see that
\begin{equation}
    \label{eq.Product_Space1}
    x^* \in C \Leftrightarrow \mathbf{z}^* \coloneqq (x^*,x^*,\ldots,x^*)\in \mathbf{W}\cap \mathbf{D}.
\end{equation}
Due to \cref{eq.Product_Space1}, solving problem \cref{eq.CFP_1} corresponds to solve
\begin{equation}
    \label{eq.Product_Space2}
   \text{ find }\mathbf{z}^*\in \mathbf{W}\cap \mathbf{D}.
\end{equation}
Since $\mathbf{D}$ is an affine manifold, CRM operator \cref{eq.definition_CRMop}   can be applied to the convex sets $\mathbf{D}$ and $\mathbf{W}$ in the product space $\re^{nm}$ given rise to the CRM-Prod iteration, \emph{i.e.}, 
\[\label{eq:CRMProd}
    \mathbf{z}^{k+1} \coloneqq  \CRMOp_{\mathbf{W},\mathbf{D}}( \mathbf{z}^{k})=\circum( \mathbf{z}^{k+1},R_{\mathbf{W}}( \mathbf{z}^{k}),R_{\mathbf{D}}(R_{\mathbf{W}}( \mathbf{z}^{k})).
\]
Unfortunately, the numerical evidence in \cite{Behling:2021} showed that the cost of introducing the product space is expensive. 

More recently, an extension of CRM, called the centralized circumcentered-reflection method (cCRM), was introduced in \cite{Behling:2021} for overcoming the drawback of CRM, namely the request that one of the sets be an affine manifold.
For describing cCRM, we need some notation. Suppose that $A,B \subset \re^n$ are both closed convex sets and define the \emph{alternating projection operator} for SePM $Z_{A,B}:\re^n \rightarrow \re^n$ as
\begin{equation}
\label{eq.def.2SePM}
    Z_{A,B}\coloneqq P_A\circ P_B,
\end{equation}
and the \emph{simultaneous projection operator} for SiPM $\Tilde{Z}_{A,B}:\re^n \rightarrow \re^n$ as
\begin{equation}
\label{eq.def.3SiPM}
    \Tilde{Z}_{A,B} \coloneqq \frac{1}{2} (P_A + P_B),
\end{equation}
with $P_A$, $P_B$ as in \cref{eq.definition_Projection}.
We also define operator $\Bar{Z}_{A,B}:\re^n \rightarrow \re^n$ as
\begin{equation}\label{eq.centralizationProcedure}
    \Bar{Z}_{A,B} \coloneqq \frac{1}{2}(Z_{A,B}+P_B\circ Z_{A,B})=\Tilde Z_{A,B}\circ Z_{A,B}.
\end{equation}
Finally, instead of using sequential reflections as in \cref{eq.definition_CRMop}, cCRM relies on the \emph{parallel circumcenter operator} $\pCRMOp_{A,B}$ defined by circumcentering parallel reflections, \emph{i.e.}, for $x\in \re^n$ we have  \[
    \label{eq:paralellCRMop}\pCRMOp_{A,B}(z)\coloneqq \circum(x,R_A(x),R_B(x)).\]

With this notation, we define the cCRM operator $T_{A,B}:\re^n \rightarrow \re^n$ for sets $A$ and $B$ at $x\in\re^n$ as
\begin{align}
    T_{A,B}(x)&\coloneqq\pCRMOp_{A,B}(\Bar{Z}_{A,B}(x)) \\  &= \circum(\Bar{Z}_{A,B}(x),R_A(\Bar{Z}_{A,B}(x)),R_B(\Bar{Z}_{A,B}(x))).\label{eq.definitionOfcCRMOperator}
\end{align}
Therefore, given $x^0\in \re^n$, the cCRM method is defined by the iteration
\begin{equation}
    \label{eq.cCRM_iterationScheme}
    x^{k+1} = T_{A,B}(x^k).
\end{equation}

It has been proved in \cite{Behling:2021} that the sequence defined by cCRM converges to point $x^*\in A\cap B$ whenever $A\cap B \neq \emptyset$. Under an error bound assumption the sequence converges linearly. Moreover, under some additional smoothness hypotheses and an error bound condition, the sequence generated by cCRM was proved to have superlinear convergence \cite[Thm.~3.13]{Behling:2021}.

In this paper, we will extend the cCRM to the case of CFP with $m$ sets. The natural way to generalize it is to choose a pair of sets among $\{C_1,C_2,\ldots,C_m\}$ at the iteration $k$ and apply cCRM to this pair of sets. We define this pair as $\ell(k)$ and $r(k)$, with $\ell(k),r(k)\in \{1,\ldots,m\}$, and then apply the operator in \cref{eq.cCRM_iterationScheme} to this pair. Therefore, the \emph{successive centralized circumcentered-reflection method} (s-cCRM) for 
 CFP with $m$ sets is defined as
\begin{equation}
\label{eq.definitionOfCRMOperator_m_sets}
    z^{k+1} = T_{C_{r(k)},C_{\ell(k)}}(z^k),
\end{equation}
where $T_{C_{r(k)},C_{\ell(k)}}$ is the cCRM operator defined in \cref{eq.definitionOfcCRMOperator} w.r.t. $C_{\ell(k)}$ and $C_{r(k)}$. 

The sequences $\{\ell(k)\}, \{r(k)\}$, determining which sets are used at the $k$-th iteration, are called \emph{control sequences}.
In all successive projection-type methods,  control sequences considerably impact the algorithms' performance. Indeed,  strategies using  such sequences have been studied before for SePM; see, for instance, ~\cite{McCormick:1977,Censor:1981,Martinez:1985}.

{
 The following control sequences, which we will use in this paper, are classical; an in-depth treatment of them can be found in \cite{Censor:1981}}. 

A natural one is the \emph{cyclic control sequence}, \emph{i.e.}, at iteration $k$ we choose
\[\label{eq:cyclic_control}
\begin{aligned}
    \ell(k)=1,2,3, \ldots, m-1, m, 1,2,\ldots,\text{  and  } \\
     r(k)=2,3,4, \ldots, m-1,m,1,2,3, \dots.\end{aligned}\] {
This is the option found in the first approaches to these types of methods.
A generalization of this control sequence is the \emph{almost cyclical control sequence}, which requires that each set is used at least once in any cycle of iterations of some predetermined length.
A limitation of cyclic and almost cyclic control sequences is that they do not use} any information available at iteration $k$, \emph{e.g.}, the distances of the present iterate to the target sets.

An alternative option is the \emph{most violated constraint control sequence (distance version)}, which chooses $\ell(k)$ as the set which lies the farthest away from $z^k$, with the goal of getting closer to the intersection set $C$. The drawback is that the distance from $z^k$ to all sets $C_i$ must be calculated to determine $\ell(k)$, which in general, is computationally expensive. 

When the sets $C_i$ are represented as sublevel sets of convex functions, that is, $C_i \coloneqq \{x\in \re^n\mid f_i(x) \leq 0\}$ where $f_i:\re^n \rightarrow \re$ is convex, for all $i=1,2,\ldots,m$. In this case, which happens frequently in applications, we have the option of the \emph{most violated constraint control sequence (functional value version)}, where $\ell(k)$ is chosen so that $f_{\ell(k)}(z^k) \ge f_i(z^k)$ for all $i$, again with the expectation of getting closer to $C$.  

{

These three options (almost cyclical and most violated constraint in distance version, or functional value version)
are mutually independent. As explained above, the third one is expected to perform better than the second one, which is expected to perform better than the first one. Our numerical experiments confirm this behavior. Also, we are able to prove
linear or superlinear convergence (under adequate assumptions)
for the second and third option. We do not have results of this type for the almost cyclical control sequence.} 

Next, we formally define these three control sequences. 
\begin{definition}[Control sequences]
\label[definition]{def.ControlSequence}
We say that the sequence $\{\ell(k)\}$ is:
\begin{listi}
    \item \emph{almost cyclic}, if $1\leq \ell(k) \leq m$  and there exists an integer $Q \geq m$ such that, for all $k\geq 0$ and $\{1,2,\ldots,m\} \subset \{\ell(k+1),\ell(k+2),\ldots,\ell(k+Q)\}$. An almost cyclic control with $Q = m$ is called \emph{cyclic}; 
    \item \emph{most violated constraint} (distance version),
    if 
    \begin{equation}
    \label{eq.MostViolated1}
    \begin{split}
        \ell(k) & \coloneqq \argmaxA\limits_{1\leq i \leq m} \{\dist(z^k, C_i)\} , \\
        r(k) & \coloneqq \argmaxA\limits_{1\leq i \leq m} \{\dist(P_{C_{\ell(k)}}(z^k),  {C_i}) \};
    \end{split}
    \end{equation}
    \item \emph{most violated constraint} (function value version), if we assume that the sets $C_i$ in problem \cref{eq.CFP_1} are in the form
    \begin{equation}
    \label{eq.CFP_function_values_verson}
        C_i \coloneqq \{x\in \re^n\mid f_i(x) \leq 0\},
    \end{equation}
    where $f_i:\re^n \rightarrow \re$ is convex for all $i=1,2,\ldots,m$, then the most violated constraint control sequence in the function value version is given by 
    \begin{equation}
    \label{eq.MostViolated2}
    \begin{split}
        \ell(k) & \coloneqq  \argmaxA\limits_{1\leq i \leq m} \{f_i(z^k)\}, \\
        r(k) & \coloneqq \argmaxA\limits_{1\leq i \leq m} \{f_i(P_{C_{\ell(k)}}(z^k)) \}.
    \end{split}
    \end{equation}
    
\end{listi}
\end{definition}

When the sets in the CFP are presented as in \cref{eq.CFP_function_values_verson}, the control sequence of the most violated constraint (function value version) depends not only on the sets themselves but also on the specific functions $f_i$ used to represent them,
which are, of course, not unique. The control sequence will change if we change the functions $f_i$ (keeping the same sets $C_i$). Here,  we assume that the $f_i$'s are fixed from the onset, so that the original problem can be seen
as that of finding a point $\bar x$ which satisfies $f_i(\bar x)\le 0$ for all $i\in \{1,2, \dots m\}$.

We note that Borwein and Tam, in \cite{Borwein:2014,Borwein:2015}, introduced and analyzed a cyclic Douglas-Rachford iteration scheme. In this paper, in addition to studying the cyclic version of cCRM, we are going to  employ the other two control sequences above. 

Let us define the terminology for addressing the different algorithmic choices of the control sequences used in s-cCRM iteration given in \cref{eq.definitionOfCRMOperator_m_sets}. \textbf{Algorithm 1} is s-cCRM with the almost cyclic control sequence presented in \Cref{def.ControlSequence}(i); \textbf{Algorithm 2} stands for s-cCRM with the most violated constraint control sequence (distance version) introduced \Cref{def.ControlSequence}(ii); and \textbf{Algorithm 3} considers the most violated constraint control sequence (function value version) set in \Cref{def.ControlSequence}(iii)  within s-cCRM.

The paper is organized as follows: In \Cref{sec.preliminaries}, we give definitions and preliminaries. In \Cref{sec:convergence}, we introduce and prove the global convergence of s-cCRM, under the three control sequences defined above. In \Cref{sec:convergence_rate}, we prove linear convergence for both versions of the most violated constraint control sequence
under a standard \emph{error bound} assumption, and superlinear convergence under some additional smoothness assumptions. \Cref{sec:NumericalExperiments} presents numerical experiments comparing s-cCRM with SePM and CRM-Prod.

\section{Preliminaries}\label{sec.preliminaries}
Throughout this paper, we work in $\re^n$, and the norm $\norm{\cdot }$ is the norm induced by the Euclidean scalar product $\scal{\cdot}{\cdot}$. In this section we recall several basic results needed in our convergence analysis. 

First, we introduce some orthogonal projection properties.

\begin{lemma}[{Properties of projection onto convex sets}]
\label[lemma]{lem.Propery_Projection}
Let $C \subset \re^n$ be a nonempty closed convex set and $P_C$ be the orthogonal projection onto set $C$ defined in \cref{eq.definition_Projection}. Then, the following hold:
\begin{listi}
    \item For all $x,y\in \re^n$, \(\|P_C(x)-P_C(y)\|^2 \leq \|{x-y}\|^2 - \|{(P_C(x)-x) - (P_C(y)-y)}\|^2\). 
    \item For all $x \in \re^n$  and   $s\in C$, $\|P_C(x)-s\| \leq \|x-s\|$.
    \item For all $x \in \re^n$ and  $s\in C$, $\|P_C(x)-x\|^2 + \|P_C(x) - s\|^2 \leq \|x-s\|^2$. 
    \item If $C \coloneqq \bigcap_{i=1}^{m} C_i$, with $C_i, i=1\ldots,m$ being closed convex sets of $\re^n$, then  $\dist(P_{C_i}(x),C)\leq \dist(x,C)$, for all $x\in \re^n$.  
\end{listi}

\end{lemma}

\begin{proof}
    Item (i) is by \cite[Prop. 4.16]{Bauschke:2017a}, while items (ii) and (iii) are direct consequences of (i). Regarding item (iv), if $P_{C_i}(x)\in C$ we are done. Suppose it is not and let $\hat s, s \in C$ be the realizers of the distances between $P_{C_i}(x)$ and $x$ to $C$, respectively.   Then, taking into account that $s \in C_i$, we have
    \begin{align}
        \dist(P_{C_i}(x),C) &= \norm{P_{C_i}(x)-\hat s} \leq \norm{P_{C_i}(x)- s} \\        & \leq \norm{x - s} =  \dist(x,C),
    \end{align}
    where the first inequality is by the definition of distance realizers,  and the second inequality is by item (ii).
\end{proof}

 \Cref{lem.Propery_Projection}(i) means that  projections onto convex sets are \emph{firmly nonexpansive}. Note that this property is stronger than the well-known \emph{nonexpansiveness} of projections, that is, 
 \(
            \label{eq.definition_Nonepansive}
            \|P_C(x)-P_C(y)\| \leq \|x-y\|,
        \) for all $x,y\in \re^n$, with $C\subset \re^n$ being closed and convex.
        \Cref{lem.Propery_Projection}(ii) indicates that  projections are \emph{quasinonexpansive}, while \Cref{lem.Propery_Projection}(iii) says that projections are \emph{firmly quasinonexpansive}; see \cite[Def.~4.1]{Bauschke:2017a}.

We continue with several definitions and facts, beginning with the notion of \emph{Fej\'er monotonicity}.
\begin{definition}[Fej\'er monotonicity]
\label[definition]{def.FejerMonotonicity}
Suppose that $M\subset \re^n$ is nonempty. Let $\{x^k\}$ be a sequence in $\re^n$. We say $\{x^k\}$ is \emph{Fej\'er monotone} with respect to $M$ if
\(
    \| x^{k+1} - s\| \leq \|x^k-s\|,
\)
for all $s\in M$, and for all $k\in \na$.
\end{definition}
The following lemma gives the properties of Fej\'er monotone sequences.

\begin{lemma}[{Fejér monotonicity properties~\cite[Thm.~2.16]{Bauschke:1996}}]
\label[lemma]{lem.Property_FejerMOnoton_sequence}
Suppose that $M \subset \re^n$ is nonemtpy, and the sequence $\{x^k\}$ is Fej\'er monotone with respect to $M$. Then,
\begin{listi}
    \item $\{x^k\}$ is {bounded}. 
    \item For every $s\in M$, $\{\|x^k - s\|\}$ converges.
    \item If there exists a cluster point $x^*$ of $\{x^k\}$ such that $x^*\in M$, then $\{x^k\}$ converges to $x^*$.
\end{listi}
\end{lemma}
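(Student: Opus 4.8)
The plan is to reduce everything to a single, elementary observation: the defining inequality of Fej\'er monotonicity says that for each \emph{fixed} $s\in C$ the real sequence $\{\|x^k-s\|\}$ is nonincreasing. Once this is in hand, all three items follow from standard facts about monotone bounded sequences of reals, so the proof is essentially bookkeeping. I would first remark that these statements implicitly presuppose $C\neq\emptyset$, since we need a point $s\in C$ to invoke the inequality.

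For item (i), I would fix any $s\in C$. Because $\{\|x^k-s\|\}$ is nonincreasing, it is bounded above by its first term $\|x^0-s\|$. The triangle inequality then gives $\|x^k\|\le \|x^k-s\|+\|s\|\le \|x^0-s\|+\|s\|$ for every $k$, which bounds the whole sequence and establishes boundedness. For item (ii), I would note that for each $z\in C$ the sequence $\{\|x^k-z\|\}$ is nonincreasing and bounded below by $0$; by the monotone convergence theorem in $\re$ it therefore converges.

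Item (iii) is where the only genuine idea appears: it uses (ii) to upgrade subsequential convergence to convergence of the whole sequence. Assuming $x^*$ is a cluster point with $x^*\in C$, I would extract a subsequence $x^{k_j}\to x^*$, so that $\|x^{k_j}-x^*\|\to 0$. Since $x^*\in C$, part (ii) applies with $z=x^*$ and guarantees that the full sequence $\{\|x^k-x^*\|\}$ converges to some limit $\ell\ge 0$. But a convergent real sequence possessing a subsequence tending to $0$ must itself tend to $0$, forcing $\ell=0$. Hence $\|x^k-x^*\|\to 0$, that is, $x^k\to x^*$.

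There is no real obstacle here; the arguments are routine. If anything, the point to be careful about is keeping the quantifiers straight—monotonicity of $\{\|x^k-s\|\}$ holds for each fixed $s$ separately—and explicitly invoking the nonemptiness of $C$ so that the chosen reference points actually exist.
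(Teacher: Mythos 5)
Your proposal is correct and follows essentially the same route as the paper: your argument for item (iii) — extracting a subsequence converging to $x^*\in C$, invoking (ii) with $z=x^*$ to get convergence of the full sequence $\{\|x^k-x^*\|\}$, and concluding the limit must be $0$ — is exactly the paper's proof. For items (i) and (ii) the paper simply cites Proposition 5.4 of Bauschke--Combettes, and your elementary monotone-sequence arguments are precisely the standard proofs behind that citation, so nothing differs in substance.
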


 Next, we are going to present some results about the boundedness and approximation property of convex functions.

 \begin{definition}[Local Lipschitz continuity]
 \label[definition]{def.LocallyLipschitzContinuous}
    Let $f: \re^n\rightarrow \re$ and $U\subset \re^n$. We say that $f$ is \emph{locally Lipschitz continuous} in $U$  if for every $z\in U$ there exist a constant $L>0$ and a neighborhood $V$ of $z$,  such that
     \begin{equation}
         \|f(x)-f(y)\| \leq  L \|x-y \|,
     \end{equation}
     for all $x,y \in V\cap U$.
 \end{definition}

 It is well-known that convex functions are \emph{locally Lipschitz continuous}; see  \cite[Thm.~2.1.12]{Borwein:2010}. We now recall some properties on subgradients.
 
 \begin{definition}[Subgradient]
     \label[definition]{def.Subgradient}
     Let $f:\re^n\rightarrow \re$ be a convex function. We say that a vector $v\in \re^n$ is a \emph{subgradient} of $f$ at a point $x\in \re^n$ if, for all $y\in \re^n$,  
     \begin{equation}
         \label{eq.Subgradient}
         f(y) \geq f(x) +\scal{v}{y-x}.
     \end{equation}

 The set of all \emph{subgradients} of a convex function at $x\in \re^n$ is called the \emph{sub\-dif\-fe\-ren\-tial} of $f$ at $x$, and is denoted by $\partial f(x)$. We present now a version of the mean value theorem for convex functions.     
\end{definition}

 \begin{lemma}[{Mean value theorem for convex functions~\cite{Wegge:1974}}]
     \label[lemma]{lem.MVT_ConvexFunction}
     Let $f:\re^n \rightarrow \re$ be a convex function and let $x$ and $y$ be vectors in $\re^n$. Then, there exists a vector $u\in \re^n$ and a subgradient $v(u)\in \partial f(u)$ such that
     \begin{equation}
         \label{eq.MVT_ConvexFunction}
         f(y) = f(x) +  \scal{v(u)}{y-x},
     \end{equation}
     where $u=\alpha x +(1-\alpha)y$, and $\alpha \in (0,1)$.
 \end{lemma}
 
 We end this section recalling that the subdifferential operator of the convex functions $f$, $\partial f:\re^n\rightrightarrows\re^n$,  is \emph{maximal monotone}~\cite[Cor.~31.5.2]{Rockafellar:1997} and \emph{locally bounded}~\cite[Thm.~3]{Qi:1983}.

 \section{Convergence analysis of s-cCRM for the multiset case}\label{sec:convergence}
We proceed to the convergence analysis of s-cCRM applied to the Convex Feasibility Problem with $m$ convex sets.

\subsection{The cCRM for two convex sets}

In this subsection we are going to present some results from  \cite{Behling:2021} regrading cCRM applied to two closed convex sets,  in order to allow us to define the  s-cCRM iteration to solve  problem \cref{eq.CFP_1}. 

First, we present the notion of \emph{centralized point}
in connection with cCRM.

\begin{definition}[Centralized point]
\label[definition]{def.CentralizedPoint}
    Let $A,B\subset \re^n$ be two nonempty closed convex sets, and a point $z\in \re^n$ is said to be \emph{centralized} with respect to $A,B$ if
    \begin{equation}
    \label{eq.CentralizedPoint}
        \langle R_A(z)-z,R_B(z)-z\rangle \leq 0.
    \end{equation}
\end{definition}

Now, we show that if we apply the operator $\Bar{Z}_{A,B}\coloneqq \Tilde{Z}_{A,B}\circ Z_{A,B}$, given in \cref{eq.centralizationProcedure}, to a point $z\in \re^n$, then the resulting point will be centralized with respect to $A$ and $B$.

\begin{lemma}[{Centralization procedure~\cite[Lem.~2.2]{Behling:2021}}]
    \label[lemma]{lem.CentralizedProcedure}
    Let $A,B\subset \re^n$ be two nonempty closed convex sets with nonempty intersection. For any $z\in \re^n$, then $\Bar{Z}_{A,B}(z)$  is centralized w.r.t. $A$ and $B$. 
\end{lemma}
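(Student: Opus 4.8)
The plan is to reduce the centralization inequality to a statement about just three points and then close it with the projection inequality together with Cauchy--Schwarz. First I would fix notation: set $u := Z_{A,B}(z) = P_A(P_B(z))$, $v := P_B(u)$, and $w := \tilde Z_{A,B}(u)$, the point we must show is centralized. Since $u \in A$ we have $P_A(u) = u$, so $w = \frac{1}{2}(u+v)$ is simply the midpoint of $u$ and its $B$-projection $v$. Moreover, because $R_A(w) - w = 2(P_A(w) - w)$ and $R_B(w) - w = 2(P_B(w) - w)$, the centralization condition $\langle R_A(w)-w, R_B(w)-w\rangle \le 0$ is equivalent to $\langle P_A(w)-w, P_B(w)-w\rangle \le 0$, which is what I will establish.

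The next step is to pin down $P_B(w)$. I claim $P_B(w) = v$. Indeed, $w - v = \frac{1}{2}(u-v)$, so for every $b \in B$ the projection characterization of $v = P_B(u)$ gives $\langle w - v, b - v\rangle = \frac{1}{2}\langle u - v, b - v\rangle \le 0$, which is exactly the characterization of $v$ as $P_B(w)$. Hence $P_B(w) - w = \frac{1}{2}(v-u)$. Writing $p := P_A(w)$ and expanding $p - w = (p-u) - \frac{1}{2}(v-u)$, the target inner product becomes $\langle P_A(w)-w, P_B(w)-w\rangle = \frac{1}{2}\big(\langle p-u, v-u\rangle - \frac{1}{2}\|v-u\|^2\big)$, so everything reduces to the single inequality $\langle p-u, v-u\rangle \le \frac{1}{2}\|v-u\|^2$.

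The main step, and the one genuinely non-mechanical point, uses the projection inequality for $p = P_A(w)$ against the point $u \in A$, namely $\langle w - p, u - p\rangle \le 0$. The subtlety is that substituting $w = \frac{1}{2}(u+v)$ produces a \emph{lower} bound, $\langle v-u, p-u\rangle \ge 2\|p-u\|^2$, whereas what I need is an \emph{upper} bound on the same quantity. The trick is to bootstrap the lower bound through Cauchy--Schwarz: from $2\|p-u\|^2 \le \langle v-u, p-u\rangle \le \|v-u\|\,\|p-u\|$ I obtain $\|p-u\| \le \frac{1}{2}\|v-u\|$, and then a second application of Cauchy--Schwarz gives $\langle p-u, v-u\rangle \le \|p-u\|\,\|v-u\| \le \frac{1}{2}\|v-u\|^2$ (the degenerate case $p = u$ being trivial). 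This is exactly the bound needed, so the target inner product is $\le 0$ and $w$ is centralized. I expect the interplay here—recognizing that $P_B(w)=v$ collapses the problem to the triangle $u, v, p$, and that the projection inequality must be turned into an upper bound via Cauchy--Schwarz—to be the crux; the hypothesis $A\cap B \neq \emptyset$ enters only to keep the ambient operators well defined.
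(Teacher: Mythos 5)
Your proof is correct --- every step checks out --- but note that there is nothing in this paper to compare it against: the paper's ``proof'' of this lemma is a bare citation of Lemma 2.2 in \cite{Behling.1}, so your argument is a genuinely self-contained replacement rather than a rederivation of anything written here. The skeleton is sound: since $u:=P_A(P_B(z))\in A$, the point in question is the midpoint $w=\tfrac12(u+v)$ with $v=P_B(u)$; the variational characterization of projections correctly yields $P_B(w)=v$; and the algebraic reduction of centralization to $\langle p-u,\,v-u\rangle\le\tfrac12\|v-u\|^2$, with $p=P_A(w)$, is exact.

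One structural remark: the Cauchy--Schwarz bootstrap in your main step is avoidable, and avoiding it shortens the proof and removes the case split at $p=u$. Once you know $P_B(w)=v$, observe that $P_B(w)-w=\tfrac12(v-u)=w-u$, so the centralization condition is just $\langle p-w,\,w-u\rangle\le 0$. Decomposing $w-u=(w-p)+(p-u)$ gives
\begin{equation}
\langle p-w,\,w-u\rangle \;=\; -\,\|p-w\|^2+\langle p-w,\,p-u\rangle \;\le\; 0,
\end{equation}
since the second term equals $\langle w-p,\,u-p\rangle$, which is $\le 0$ by the very projection inequality you already invoke (with $u\in A$, $p=P_A(w)$). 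This yields the target in one line, with no norm estimates; what your longer route buys in exchange is the explicit (and unneeded) bound $\|P_A(w)-u\|\le\tfrac12\|v-u\|$. Finally, your closing remark can be sharpened: the hypothesis $A\cap B\neq\emptyset$ is not needed even for well-definedness, since projections exist as soon as $A$ and $B$ are separately nonempty, closed and convex; the intersection hypothesis is simply never used in this lemma.
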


We next state the firmly quasinonexpansiveness of a parallel circumcenter iteration taken from a centralized point. This result means that parallel circumcenter steps  taken from centralized points move towards the solution of the convex feasibility problem involving two intersecting sets.

\begin{lemma}[{Firmly quasinonexpansiveness of circumcenters at centralized points~\cite[Lem.~2.5]{Behling:2021}}]
\label[lemma]{lem.FirmlyNonexpansiveness_Cirmcum}
    Let $A,B\subset \re^n$ be two nonempty closed convex sets with nonempty intersection. Assume that $z\in \re^n$ is a centralized point with respect to $A,B$. Then, $\pCRMOp_{A,B}(z)$ defined  in \cref{eq:paralellCRMop} satisfies
    \begin{equation}
        \label{eq.FirmlyNonexpansiveness_Cirmcum}
        \|\pCRMOp_{A,B}(z)-s\|^2 \leq \|z-s\|^2 - \|z-\pCRMOp_{A,B}(z)\|^2,
    \end{equation}
    for all $s\in A\cap B$. 
\end{lemma}

Let us state the firmly quasinonexpansiveness of the cCRM operator. This lemma was the key result in \cite{Behling:2021} to prove the convergence of cCRM, when CFP consists of two sets.

\begin{lemma}[{Firmly quasinonexpansiveness of cCRM~\cite[Lem.~2.6]{Behling:2021}}]
    \label[lemma]{lem.Firm-Quasinonexpansiveness_cCRM}
    Let $A,B\subset \re^n$ be two nonempty closed convex sets with nonempty intersection. Let $z\in \re^n$. Then, $T_{A,B}(z)$ defined in \cref{eq.definitionOfcCRMOperator} satisfies
    \begin{equation}
        \label{eq.Firm-Quasinonexpansiveness_cCRM}
        \|T_{A,B}(z)-s\|^2 \leq \|z-s\|^2 -\frac{1}{8}\|z-T_{A,B}(z)\|^2,
    \end{equation}
    for all $s\in A\cap B$. 
\end{lemma}

\subsection{Successive cCRM for the multiset case}

We present now the results that extend the cCRM for the multiset case, that is, to solve problem \cref{eq.CFP_1}. We consider three options for the control sequence, as explained in \Cref{sec:intro}, namely almost cyclic, most violated constraint (distance version) and most violated constraint (function value version), giving rise to Algorithms 1, 2 and 3, respectively.

Initially, \Cref{lem.FirmlyNonexpansiveness_Cirmcum} is extended to the multiset case.
\begin{corollary}[Firmly quasinonexpansiveness of parallel circumcenters at centralized points]
\label[corollary]{lem.Fejermonotonicty_Circ}
Let $C_1,\ldots,C_m \subset \re^n$ be nonempty closed convex sets with nonempty intersection. Assume $i,j\in\{1,2,\ldots,m\}$ and suppose  $z\in \re^n$ is a centralized point with respect to $C_i\cap C_j$. Then, $\pCRMOp_{C_i,C_j}(z)$ defined  as in \cref{eq:paralellCRMop} satisfies
\begin{equation}
\label{eq.FejerMontone_Circ1}
    \norm{\pCRMOp_{C_i,C_j}(z) - s} \leq \|z-s\| - \|z - \pCRMOp_{C_i,C_j}(z)\|,
\end{equation}for all $s\in C\coloneqq \bigcap_{i=1}^m C_i$.
\end{corollary}
\begin{proof}
 Since $C \subset C_i\cap C_j$ for any $i,j\in\{1,2,\ldots,m\}$, the result is a direct consequence of \Cref{lem.FirmlyNonexpansiveness_Cirmcum}.\end{proof}

 In sequel, we establish that  SiPM iteration \cref{eq.def.3SiPM} is quasinonexpansive.
 
\begin{lemma}[Quasinonexpansiveness of simultaneous projections]
\label[lemma]{lem.Fejer_monotonicity_SiPM}
Let $C_1,\ldots,C_m\subset \re^n$ be nonempty closed convex sets with $C$ being their nonempty intersection. Assume that $z\in \re^n$ is an arbitrary point. Then, we have 
\begin{equation}
    \|\Tilde{Z}_{C_i,C_j}(z) - s \| \leq \|z-s\|,
\end{equation}
for each $s\in C$ and for every $i,j \in \{1,\ldots,m\}$.
\end{lemma}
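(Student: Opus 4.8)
The plan is to reduce the claim to two elementary facts: the convexity of the Euclidean norm (i.e.\ the triangle inequality applied to a convex combination) and the nonexpansiveness of orthogonal projections established in \ref{lem.Propery_Projection}(i). First I would unfold the definition of the simultaneous projection operator, writing $\Tilde{Z}_{C_i,C_j}(z)=\tfrac{1}{2}\bigl(P_{C_i}(z)+P_{C_j}(z)\bigr)$, so that the quantity to be bounded is simply the distance from the midpoint of the two projections of $z$ to the common point $s$.

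Next I would write $s=\tfrac12 s+\tfrac12 s$ and regroup, obtaining
\begin{equation*}
\Tilde{Z}_{C_i,C_j}(z)-s=\tfrac12\bigl(P_{C_i}(z)-s\bigr)+\tfrac12\bigl(P_{C_j}(z)-s\bigr),
\end{equation*}
and then apply the triangle inequality to get
\begin{equation*}
\|\Tilde{Z}_{C_i,C_j}(z)-s\|\le \tfrac12\|P_{C_i}(z)-s\|+\tfrac12\|P_{C_j}(z)-s\|.
\end{equation*}

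The final step exploits the hypothesis on $s$. Since $s\in\cap_{i=1}^m C_i$, in particular $s\in C_i$ and $s\in C_j$ simultaneously; hence \ref{lem.Propery_Projection}(i) applies to each projection separately, giving $\|P_{C_i}(z)-s\|\le\|z-s\|$ and $\|P_{C_j}(z)-s\|\le\|z-s\|$. Substituting these two bounds into the previous display and collecting the coefficients $\tfrac12+\tfrac12=1$ yields $\|\Tilde{Z}_{C_i,C_j}(z)-s\|\le\|z-s\|$, which is exactly the assertion.

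I do not expect any genuine obstacle here: the estimate is a direct consequence of nonexpansiveness combined with the averaging structure of $\Tilde{Z}_{C_i,C_j}$. The only point requiring a moment's care is recognizing that membership of $s$ in the \emph{full} intersection is precisely what licenses applying the nonexpansiveness inequality to the projections onto $C_i$ and onto $C_j$ independently; no centralization hypothesis on $z$ and no property of the sets beyond what \ref{lem.Propery_Projection} already encodes is needed, which is why the statement holds for an arbitrary $z\in\re^n$.
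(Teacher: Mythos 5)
Your proposal is correct and follows essentially the same argument as the paper: unfold $\Tilde{Z}_{C_i,C_j}$ as the average of the two projections, split via the triangle inequality (the paper does this in one step, you make the regrouping $s=\tfrac12 s+\tfrac12 s$ explicit), and then apply Lemma \ref{lem.Propery_Projection}(i) to each term using $s\in C_i\cap C_j$. There is no substantive difference between the two proofs.
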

\begin{proof}
    Observe that, for $z\in \re^n$ and $s\in C$, 
\begin{equation}
\begin{split}
    \|\Tilde{Z}_{C_i,C_j}(z) - s \|  & = \|\frac{1}{2}(P_{C_i}+P_{C_j})(z)-s\| \\
    & \leq \frac{1}{2}\|P_{C_i}(z) -s\| +\frac{1}{2}\|P_{C_j}(z) - s\| \\
    & \leq \|z-s\|,
\end{split}
\end{equation}
where the second inequality holds by \Cref{lem.Propery_Projection}(ii).\end{proof}

Next, the firmly quasinonexpansiveness of the s-cCRM iteration \cref{eq.definitionOfCRMOperator_m_sets} is stated as a corollary.

\begin{corollary}[Firmly quasinonexpansiveness of s-cCRM]
\label[corollary]{lem.Fejermonotonicity_cCRM_m-sets_CFP.1}
Let $C_1,C_2,\ldots,C_m$ be nonempty closed convex sets with nonempty intersection. Then, for all $z\in \re^n$, we have
\begin{equation}
\label{eq.Fejermonotonicity_cCRM_m-sets_CFP.2}
     \|T_{C_i,C_j}(z)-s\|^2 \leq \|z-s\|^2 -\frac{1}{8}\|z-T_{C_i,C_j}(z)\|^2,
\end{equation}
for all $s\in C\coloneqq  \bigcap_{i=1}^m C_i$, and for arbitrary $i,j\in \{1,\ldots,m\}$, with $i\neq j$. 

\end{corollary}
\begin{proof}
Using the fact that  $C \subset C_i\cap C_j$ for any $i,j\in\{1,\ldots,m\}$, the result is directed yielded by \Cref{lem.Firm-Quasinonexpansiveness_cCRM}.


\end{proof}
We state the Fejér monotonicity of s-cCRM, which  follows from \cref{eq.Fejermonotonicity_cCRM_m-sets_CFP.2} immediately.

\begin{corollary}[Fejér monotonicity of s-cCRM]
    \label[corollary]{cor.Fejermonotonicity_cCRM_m-sets_CFP.1}
    Let $C_1,\ldots,C_m\subset \re^n$ be  nonempty closed convex sets with nonempty intersection $C$, and suppose that the sequence $\{z^k\}\subset \re^n$ is generated by s-cCRM defined in \cref{eq.definitionOfCRMOperator_m_sets} with any of the  control sequences given in \Cref{def.ControlSequence}. Then $\{z^k\}$ is Fej\'er monotone with respect to $C$.  
\end{corollary}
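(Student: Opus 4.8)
The plan is to reduce the statement to a single application of \Cref{lem.Fejermonotonicity_cCRM_m-sets_CFP.1}, applied uniformly over the iteration index $k$. The key point to exploit is that the bound supplied by that lemma holds for an \emph{arbitrary} index pair $(i,j)\in\{1,2,\ldots,m\}^2$; consequently it does not matter which pair the control sequence happens to select at step $k$, and the three control sequences of \Cref{def.ControlSequence} can be handled simultaneously.

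First I would record the only structural fact about the control sequence that is needed: by \Cref{def.ControlSequence}, at every step $k$ the chosen indices satisfy $\ell(k),r(k)\in\{1,2,\ldots,m\}$. Then I would fix an arbitrary $s\in C=\cap_{i=1}^m C_i$ and an arbitrary $k\geq 0$. Since intersecting over all $m$ sets only shrinks the set, we have the elementary inclusion $C\subset C_{\ell(k)}\cap C_{r(k)}$, so in particular $s\in C_{\ell(k)}\cap C_{r(k)}$, which is exactly the membership hypothesis required to invoke the previous lemma for the pair used at step $k$.

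Next I would substitute the defining recursion \eqref{eq.definitionOfCRMOperator_m_sets}, namely $z^{k+1}=T_{C_{\ell(k)},C_{r(k)}}(z^k)$, and apply \Cref{lem.Fejermonotonicity_cCRM_m-sets_CFP.1} with $i=\ell(k)$, $j=r(k)$, and $z=z^k$, which yields
\begin{equation}
    \|z^{k+1}-s\| = \|T_{C_{\ell(k)},C_{r(k)}}(z^k)-s\| \leq \|z^k-s\|.
\end{equation}
Because both $s\in C$ and $k\geq 0$ were arbitrary, this is precisely the Fej\'er monotonicity inequality of \Cref{def.FejerMonotonicity} with respect to $C$, and the argument is finished.

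I do not anticipate any genuine obstacle here: the analytic content is already encapsulated in \Cref{lem.Fejermonotonicity_cCRM_m-sets_CFP.1}, which itself rests on the firm quasi-nonexpansiveness estimate of \Cref{lem.Firm-Quasinonexpansiveness_cCRM}. The only points demanding care are logical rather than computational, namely keeping the quantifiers straight (applying the lemma with the specific pair selected at each step, while using that the lemma is valid for \emph{all} pairs) and noting the trivial inclusion $C\subset C_{\ell(k)}\cap C_{r(k)}$ that transfers membership from the full intersection to the active pair.
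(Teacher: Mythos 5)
Your proposal is correct and follows exactly the same route as the paper, which simply invokes \eqref{eq.Fejermonotonicity_cCRM_m-sets_CFP.1} (i.e., \Cref{lem.Fejermonotonicity_cCRM_m-sets_CFP.1}) and notes the result is immediate. You merely spell out the quantifier bookkeeping — choosing $i=\ell(k)$, $j=r(k)$, $z=z^k$ and using $C\subset C_{\ell(k)}\cap C_{r(k)}$ — which the paper leaves implicit.
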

\begin{proof}
    According to \cref{eq.Fejermonotonicity_cCRM_m-sets_CFP.2}, we have
    \begin{equation}
        \label{}
       \norm{z^{k+1} - s } = \|T_{C_i,C_j}(z^k)-s\|^2 \leq \|z^k-s\|^2,
    \end{equation}
    for all $s\in C$ and for arbitrary $i,j\in \{1,\ldots,m\}$, with $i\neq j$. Therefore, the sequence $\{z^k\}$ is Fej\'er monotone with respect to $C$. 
\end{proof}

Now, the asymptotic convergence of s-cCRM is stated and proved. 

\begin{corollary}[Asymptotic convergence of s-cCRM]
    \label[corollary]{cor.Asyptotically_cCRM}
    Let $C_1,\ldots,C_m\subset \re^n$ be  nonempty closed convex sets, and assume $C\coloneqq \bigcap_{i=1}^m C_i\neq \emptyset$. Suppose that the sequence $\{z^k\}\subset \re^n$ is generated by s-cCRM defined in \cref{eq.definitionOfCRMOperator_m_sets} with any of the control sequences given in \Cref{def.ControlSequence}. Then,
    \begin{equation}
    \label{eq.Asyptotically_cCRM.1}
       \|z^{k+1} - z^k\| \to 0.
    \end{equation}

\end{corollary}
\begin{proof}
    By using \Cref{lem.Fejermonotonicity_cCRM_m-sets_CFP.1}, with $z = z^k$, we get
    \begin{equation}
        \label{eq.Asyptotically_cCRM.2}
        \frac{1}{8}\| T_{C_{r(k)},C_{\ell(k)}}(z^k) -z^k \|^2 \leq \|z^k-s\|^2 - \|T_{C_{r(k)},C_{\ell(k)}}(z^k)-s\|^2,
    \end{equation}
    for any $s\in C$. The definition of s-cCRM for CFP with $m$ sets gives us
    \begin{equation}
        \label{eq.Asyptotically_cCRM.3}
        \frac{1}{8} \|z^{k+1} -z^k \|^2 \leq \|z^k-s \|^2 -\|z^{k+1}-s\|^2.
    \end{equation}
    From \Cref{cor.Fejermonotonicity_cCRM_m-sets_CFP.1}, the sequence $\{z^k\}$ is Fejér monotone with respect to $C$. Hence,  \Cref{lem.Property_FejerMOnoton_sequence}(ii) establishes the result.\end{proof}
The following lemma is keystone for  derive convergence of our proposed algorithms.
\begin{lemma}
\label[lemma]{lem.SupplyToConvergenceAnalysis} Let $C_1,\ldots,C_m\subset \re^n$ be  nonempty closed convex sets, and assume $C\coloneqq \bigcap_{i=1}^m C_i\neq \emptyset$.  
Suppose that the sequence $\{z^k\}$ is generated by s-cCRM, as defined in \cref{eq.definitionOfCRMOperator_m_sets} with any of the control sequences given in \Cref{def.ControlSequence}. Then, we have
\begin{equation}
\label{eq.SupplyToConvergenceAnalysis.1}
    \|z^{k+1}-s\| \leq \|P_{C_{\ell(k)}}(z^k)-s\|,
\end{equation}\
for all $s\in C$. 
\end{lemma}
\begin{proof}
    Note that, for $s\in C$, 
\begin{align}
    \|z^{k+1} -s \| & = \|T_{C_{r(k)},C_{\ell(k)}}(z^k)-s\| 
     \leq \|\Bar{Z}_{C_{r(k)},C_{\ell(k)}}(z^k) - s\| \\
    & \leq \|Z_{C_{r(k)},C_{\ell(k)}}(z^k) -s \| =  \| P_{C_{r(k)}} (P_{C_{\ell(k)}}(z^k)) -s \|
    \\ & \leq \| P_{C_{\ell(k)}}(z^k) -s \|,\label{eq.SupplyToConvergenceAnalysis.2}
\end{align}
where the first inequality follows from  \Cref{lem.CentralizedProcedure} and \Cref{lem.Fejermonotonicty_Circ}, the second inequality holds by \Cref{lem.Fejer_monotonicity_SiPM}, and the last one is due to \Cref{lem.Propery_Projection}(ii).\end{proof}


According to the last results, we only need  to prove that there exists a cluster point of the s-cCRM sequence lying in the intersection of the underlying sets, in order to achieve the convergence of the proposed method. We establish this in sequel, for  Algorithms 1, 2 and 3. 

\begin{theorem}[Convergence of Algorithm 1]
\label{thm.Convergence_AlmostCyclic1} Let $C_1,\ldots,C_m\subset \re^n$ be  nonempty closed convex sets, and assume $C\coloneqq \bigcap_{i=1}^m C_i\neq \emptyset$. 
Suppose that $z^0$ is an arbitrary point in $\re^n$ and the sequence $\{z^k\}$ is generated by s-cCRM operator defined in \cref{eq.definitionOfCRMOperator_m_sets} with almost cyclic control sequence. Then, there exists some point $x^* \in C\coloneqq \bigcap_{i=1}^m C_i$ such that $z^k \rightarrow x^*$. 
\end{theorem}
\begin{proof}
    By \Cref{cor.Fejermonotonicity_cCRM_m-sets_CFP.1} and \Cref{lem.Property_FejerMOnoton_sequence}(ii), we get that $\{z^k\}$ is a Fej\'er monotone sequence w.r.t. $C$, so it is bounded. Hence, there exist a subsequence $\{z^{k_j}\}$ of $\{z^k\}$ and a point $x^* \in \re^n$ such that $z^{k_j} \rightarrow x^*$. By the definition of {almost cyclic control sequence}, for each $q\in\{1,\ldots,m\}$, there exists a sequence $\{h_j\}$ which satisfies $k_j \leq h_j \leq k_j+Q$ such that $\ell(h_j) =q $, for each $j\geq 1$. By the triangle inequality,
    \begin{equation}
        \label{eq.Convergence_AlmostCyclic1.1}
        \|z^{h_j} -z^{k_j}\| \leq \sum_{i=0}^{h_j-k_j-1} \|z^{k_j+i+1} -z^{k_j+i}\| \leq \sum_{i=0}^{Q-1} \|z^{k_j+i+1} -z^{k_j+i}\|.
    \end{equation}
    Note that the rightmost side of \cref{eq.Convergence_AlmostCyclic1.1} is a sum of $Q$ terms, and each one of them converges to $0$ as $j\rightarrow \infty$ by \Cref{cor.Asyptotically_cCRM}. Hence, the whole summation goes to $0$, so that
    \begin{equation}
        \label{eq.Convergence_AlmostCyclic1.2}
        \lim_{j\rightarrow \infty} \lVert z^{h_j} - z^{k_j}\rVert = 0.
    \end{equation}
    Since $z^{h_j} = z^{k_j} + (z^{h_j} - z^{k_j})$ and $z^{k_j} \rightarrow x^*$, by assumption, we conclude from \cref{eq.Convergence_AlmostCyclic1.2} that $\lim_{j\rightarrow \infty} z^{h_j} -x^* = 0$. Note that
    \begin{align}
            \|z^{h_j+1}-s \|^2 & \leq \| P_{C_{\ell(h_j)}}(z^{h_j})-s \|^2 \\
            & \leq \|z^{h_j}-s\|^2 - \|P_{C_{\ell(h_j)}}(z^{h_j}) - z^{h_j} \|^2,    \label{eq.Convergence_AlmostCyclic1.3}
    \end{align}
    using \Cref{lem.SupplyToConvergenceAnalysis} in the first inequality, and \Cref{lem.Propery_Projection}(iii) in the second one. Therefore, 
    \begin{equation}
    \label{eq.Convergence_AlmostCyclic1.4}
        \|P_{C_{\ell(h_j)}}(z^{h_j}) - z^{h_j}\|^2 \leq \|z^{h_j} - s\|^2 - \|z^{h_j+1} - s\|^2.
    \end{equation}

    Since for all $\ell(h_j)=q$, in view of \Cref{lem.Property_FejerMOnoton_sequence}(ii), taking limit with $j\rightarrow \infty $ in \cref{eq.Convergence_AlmostCyclic1.4}, we get that the right side of \cref{eq.Convergence_AlmostCyclic1.4} goes to $0$. Consequently, 
    \begin{equation}
        \|P_{C_q}(x^*) - x^*\|^2 \leq  0.
    \end{equation}
    Hence, we obtain that $P_{C_q}(x^*)=x^*$. Since $q$ is an arbitrary index, we have that $x^* \in C_{q}$ for all $q\in\{1,2,\ldots,m\}$. Consequently, $x^*\in C$. By \Cref{lem.Property_FejerMOnoton_sequence}(iii), we get that $z^k \rightarrow x^* \in C$.
    
\end{proof}

\begin{remark} We know that the {cyclic} control sequence is a special case of the almost cyclic control sequence, so we conclude that the sequence $\{z^k\}$ generated by s-cCRM defined in \cref{eq.definitionOfCRMOperator_m_sets} will converge to some point in $C$ if we use a cyclic control sequence.
\end{remark}

\begin{theorem}[Convergence of Algorithm 2]
\label{thm.Convergence_MostViolated1}
Suppose that $z^0$ is an arbitrary point in $\re^n$ and the sequence $\{z^k\}$ is generated by the s-cCRM defined in \cref{eq.definitionOfCRMOperator_m_sets} with the most violated constraint control sequence in distance version \cref{eq.MostViolated1}. Then there exists some point $x^* \in C \coloneqq \bigcap_{i=1}^m C_i$ such that $z^k \rightarrow x^*$.
\end{theorem}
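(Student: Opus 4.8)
My plan is to follow the same skeleton as the proof of \Cref{thm.Convergence_AlmostCyclic1} but to replace the $Q$-window argument (needed for almost cyclic control) by a direct exploitation of the \emph{maximality} built into the definition of $\ell(k)$. By \Cref{cor.Fejermonotonicity_cCRM_m-sets_CFP.1} the sequence $\{x^k\}$ is Fej\'er monotone with respect to $C=\cap_{i=1}^m C_i$, hence bounded by \Cref{lem.Property_FejerMOnoton_sequence}~$(i)$, so it admits a subsequence $\{x^{k_j}\}$ converging to some $x^*\in\re^n$. The whole task reduces to showing $x^*\in C$, after which \Cref{lem.Property_FejerMOnoton_sequence}~$(iii)$ delivers convergence of the entire sequence to $x^*$.

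The central estimate is to show that the distance from $x^k$ to the \emph{most violated} set tends to zero along the whole sequence. For any fixed $s\in C$, \Cref{lem.SupplyToConvergenceAnalysis} gives $\|x^{k+1}-s\|\le\|P_{C_{\ell(k)}}(x^k)-s\|$, while \Cref{lem.Propery_Projection}~$(ii)$ (with $A=C_{\ell(k)}$ and $s\in C\subset C_{\ell(k)}$) gives $\|P_{C_{\ell(k)}}(x^k)-s\|^2\le\|x^k-s\|^2-\|P_{C_{\ell(k)}}(x^k)-x^k\|^2$. Chaining these two and rearranging yields
\begin{equation*}
    \|P_{C_{\ell(k)}}(x^k)-x^k\|^2 \le \|x^k-s\|^2-\|x^{k+1}-s\|^2 .
\end{equation*}
Since $\{\|x^k-s\|\}$ converges by \Cref{lem.Property_FejerMOnoton_sequence}~$(ii)$, the right-hand side tends to $0$, so $\|x^k-P_{C_{\ell(k)}}(x^k)\|\to 0$ as $k\to\infty$. (Note this is a telescoping/Cauchy argument, not merely an asymptotic-regularity statement, so I do not even need \Cref{cor.Asyptotically_cCRM} here.)

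The maximality of $\ell(k)$ now does the decisive work: because $\ell(k)=\argmaxA_{1\le i\le m}\|x^k-P_{C_i}(x^k)\|$, we have $\dist(x^k,C_i)=\|x^k-P_{C_i}(x^k)\|\le\|x^k-P_{C_{\ell(k)}}(x^k)\|$ for \emph{every} index $i$, whence $\dist(x^k,C_i)\to 0$ simultaneously for all $i=1,\dots,m$. Passing to the convergent subsequence and using that $z\mapsto\dist(z,C_i)$ is continuous (indeed nonexpansive), we get $\dist(x^*,C_i)=\lim_j\dist(x^{k_j},C_i)=0$, and closedness of $C_i$ forces $x^*\in C_i$ for each $i$, i.e. $x^*\in C$. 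An application of \Cref{lem.Property_FejerMOnoton_sequence}~$(iii)$ then concludes that $x^k\to x^*\in\cap_{i=1}^m C_i$.

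I expect the only genuinely delicate point to be the central estimate of the second paragraph, namely recognizing that the correct quantity to telescope is $\|x^k-P_{C_{\ell(k)}}(x^k)\|$ rather than $\|x^{k+1}-x^k\|$; everything afterward is an almost immediate consequence of the $\argmax$ definition. Relative to the almost cyclic case this argument is in fact shorter, since the most-violated-constraint rule controls all $m$ sets at once and removes any need to track when each index is visited.
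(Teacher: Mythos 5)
Your proposal is correct and follows essentially the same route as the paper's own proof: both combine \Cref{lem.SupplyToConvergenceAnalysis} with \Cref{lem.Propery_Projection}~(ii) to telescope $\dist(x^k,C_{\ell(k)})^2 \le \|x^k-s\|^2-\|x^{k+1}-s\|^2$, invoke the $\argmaxA$ property of $\ell(k)$ to bound $\dist(x^k,C_i)$ for every $i$, and finish with Fej\'er monotonicity and \Cref{lem.Property_FejerMOnoton_sequence}~(iii). The only cosmetic difference is that the paper runs the chain of inequalities directly along the subsequence $\{x^{k_j}\}$, whereas you establish $\dist(x^k,C_{\ell(k)})\to 0$ along the whole sequence and then pass to the limit via continuity of the distance function; these are interchangeable.
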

\begin{proof}
    Following the same lines of the proof of last theorem,  \Cref{cor.Fejermonotonicity_cCRM_m-sets_CFP.1} and \Cref{lem.Property_FejerMOnoton_sequence}(ii), imply that there exists a subsequence $\{z^{k_j}\}$ of ${z^k}$, and a point $x^* \in \re^n$ such that $z^{k_j} \rightarrow x^*$. Take any $i\in\{1,\ldots,m\}$, then
\begin{equation}
\label{eq.Convergence_MostViolated1.1}
    \begin{split}
        {\dist}^2(z^{k_j},C_i) & \leq {\dist}^2(z^{k_j},C_{\ell(k_j)}) = \|z^{k_j} - P_{C_{\ell(k_j)}}(z^{k_j})\|^2 \\
        & \leq \|z^{k_j} - s \|^2 - \|P_{C_{\ell(k_j)}}(z^{k_j}) - s \|^2 \\
        & \leq \|z^{k_j} - s \|^2 - \|z^{k_j+1} - s\|^2,
    \end{split}
\end{equation}
using the definition of the most violated constraint control sequence in the first inequality, \Cref{lem.Propery_Projection}(ii) in the second one and \Cref{lem.SupplyToConvergenceAnalysis} in the third one. Take $j \rightarrow \infty$, and use \Cref{lem.Property_FejerMOnoton_sequence}(ii) for proving that the rightmost expression in \cref{eq.Convergence_MostViolated1.1} converges to $0$. Hence,
\begin{equation}
\label{eq.Convergence_MostViolated1.2}
    \dist(x^*,C_i) \leq 0,~\forall i\in \{1,2,\ldots,m\}.
\end{equation}
Since $i$ is an arbitrary index, we conclude that $x^* \in C_i$ for all $i\in \{1,\ldots,m\}$, so that $x^* \in C$. The result follows immediately by \Cref{lem.Property_FejerMOnoton_sequence}(iii).\end{proof}

\begin{theorem}[Convergence of Algorithm 3]
\label{thm.Convergence_MostViolated2}
Consider problem \cref{eq.CFP_1}, suppose $z^0$ is an arbitrary point in $\re^n$ and assume that sequence $\{z^k\}$ is generated by the s-cCRM defined in \cref{eq.definitionOfCRMOperator_m_sets} with the most violated constraint control sequence in function value version \cref{eq.MostViolated2}. Then there exists some point $x^* \in  C\coloneqq \bigcap_{i=1}^m C_i$ such that $z^k \to x^*$. 
\end{theorem}
\begin{proof}
    Again, invoking  \Cref{cor.Fejermonotonicity_cCRM_m-sets_CFP.1} and \Cref{lem.Property_FejerMOnoton_sequence}(ii), we get a subsequence $\{z^{k_j}\}$  of ${z^k}$ and a point $x^* \in \re^n$ such that $z^{k_j} \rightarrow x^*$. Using the locally Lipschitz continuity of the convex functions $f_i$'s, for  $i\in \{1,\ldots,m\}$, there exists a neighborhood $V_i$ of $x^*$ such that $f_i$ is Lipschitz continuous in $V_i$ with constant $L_i$. Take $V=\bigcap_{i=1}^m V_i$ and $L=\max_{1\leq i \leq m} L_i$, so $f_i$ is Lipschitz continuous in $V$ with constant $L$ for each $i=1,2,\ldots,m$.
    
    For large enough $k_j$, we have 
\begin{equation}
\label{eq.Convergence_MostViolated2.1}
    \begin{split}
        f_i(z^{k_j}) & \leq f_{\ell(k_j)}(z^{k_j}) \leq f_{\ell(k_j)}(z^{k_j}) - f_{\ell(k_j)}(P_{C_{\ell(k_j)}}(z^{k_j}))  \\
        & \leq L \|z^{k_j} - P_{C_{\ell(k_j)}}(z^{k_j})\| \\
        & \leq L (\|z^{k_j}-s\|^2 - \|P_{C_{\ell(k_j)}}(z^{k_j})-s\|^2)^{\frac{1}{2}} \\
        & \leq L (\|z^{k_j}-s\|^2 - \|z^{k_j+1}-s\|^2)^{\frac{1}{2}},
    \end{split}
\end{equation}
using the definition of the most violated control sequence in the first inequality, the definition of orthogonal projection and \cref{eq.CFP_function_values_verson} in the second, the Lipschitz continuity in the third one, and \Cref{lem.Propery_Projection}(iii) and \cref{eq.SupplyToConvergenceAnalysis.1} in the fourth one. Taking $j \rightarrow \infty$, we have from \Cref{lem.Property_FejerMOnoton_sequence}(ii) that the rightmost expression in \cref{eq.Convergence_MostViolated2.1} converges to $0$. Hence, 
\begin{equation}
\label{eq.Convergence_MostViolated2.2}
    f_i(x^*) \leq 0.
\end{equation}
Since $i$ is an arbitrary index, we get that $f_i(x^*) \leq 0$ for all $i\in\{1,\ldots,m\}$, and hence $x^*\in C$. In view of \Cref{lem.Property_FejerMOnoton_sequence}(iii) and \Cref{cor.Fejermonotonicity_cCRM_m-sets_CFP.1}, $z^k \rightarrow x^* \in C$\end{proof}
\section{Linear and superlinear convergence rate}\label{sec:convergence_rate}

In this section, we first introduce two options of error bounds for CFP. Then, we review the proofs of the linear convergence of SiPM and SePM under these error bounds. Next, we prove, also under these an error bound, the linear convergence of s-cCRM when using most violated control sequences, \emph{i.e.},  Algorithms 2 and 3. Finally, we prove that under a smoothness assumption on the sets $C_i$ and a Slater condition, Algorithms 2 and 3 versions of s-cCRM achieve a superlinear convergence rate. These results are in agreement with those established in \cite{Behling:2021} for cCRM applied to CFP with two sets. 

\subsection{Error bounds for CFP}

Error bound conditions are regularity assumptions under which convergence rates of projection-type schemes to solve problem \cref{eq.CFP_1} have been studied; see, for instance,~\cite{Beck:2003,Bauschke:1993,Bauschke:1999,Behling:2021a,Liu:2022}. We start with \Cref{def.EB_2setsCFP} below, regarding the error bound when two closed convex sets are considered. This error bound is also called \emph{local (Lipschitz) linear regularity}~\cite[Def.~3.11]{Bauschke:1993}, which in turn can be seen as  \emph{subtransversality}~\cite[Thm.~1]{Kruger:2018a}.

\begin{definition}[Error bound for two sets]
\label[definition]{def.EB_2setsCFP}
Let $A, B \subset \re^n$ be closed  convex and assume that $A\cap B \neq \emptyset$. We say that $A$ and $B$ \emph{satisfy a local error bound condition} if for some point $\Bar{z} \in X\cap Y$, there exist a real number $\omega \in (0,1)$, and a neighborhood $V$ of $\Bar{z}$ such that
\begin{equation}
\label{eq.EB_2setsCFP}
    \omega \dist(z,A\cap B) \leq \max \{\dist(z,A),\dist(z,B)\},
\end{equation}
for all $z \in V$.
\end{definition}
Under this condition, a point in  $V$ cannot be too close to both $A$ and $B$, and  at the same time, far from $A \cap B$. This assumption was used in \cite{Behling:2018a} and \cite{Behling:2021} to prove the linear convergence rate for CRM and cCRM. Now we extend \Cref{def.EB_2setsCFP} to the multi-set case.

\begin{definition}[EB 1]
\label[definition]{def.EB1}
Let $C_1,C_2,\ldots,C_m \subset \re^n$ be nonempty closed  convex sets, and assume that $C\coloneq \bigcap_{i=1}^m C_i \neq \emptyset$. We say that $C_1,C_2,\ldots,C_m$ satisfy \emph{the local error bound condition (EB1) at a point $\Bar{z} \in C$}, if there exists a real number $\omega \in (0,1)$, and a neighborhood $V$ of $\Bar{z}$ such that
\begin{equation}
\label{eq.EB1}
    \omega \dist(z,C) \leq \max_{1\leq i \leq m} \dist(z,C_i),\tag{EB1}
\end{equation}
for all $z\in V$.
\end{definition}

If the underlying convex sets are defined by means of convex inequalities,  we may consider the following error bound condition.


\begin{definition}[EB 2]
\label[definition]{def.EB2}
Let $C_1,C_2,\ldots,C_m \subset \re^n$ be nonempty closed convex sets, and assume that $C\coloneq \bigcap_{i=1}^m C_i\neq \emptyset$, with     
\(
C_i \coloneqq \{x\in \re^n\mid f_i(x) \leq 0\},
\)
and $f_i:\re^n \rightarrow \re$ being convex, for all $i=1,2,\ldots,m$. We say that $C_1,C_2,\ldots,C_m $ satisfy  \emph{the local error bound condition 2 (EB2) at a point $\Bar{z} \in C$}, if there exists a real number $\omega \in (0,1)$, and a neighborhood $V$ of $\Bar{z}$ such that
\begin{equation}
\label{eq.EB2}
    \omega \dist(z,C) \leq \max_{1\leq i \leq m} f_i(z),  \tag{EB2}
\end{equation}
for all $z\in V\setminus C$.
\end{definition}

\begin{remark} Using the definition of the most violated constraint control sequence \cref{eq.MostViolated2}, equation \cref{eq.EB2} becomes, for all $z\in V$, 
\begin{equation}
    \label{eq.EB2together_with_control_sequence}
    \omega \dist(z,C) \leq f_{\ell}(z),
\end{equation}
where $\ell \coloneqq \argmaxA\limits_{1\leq i \leq m} \{f_i(z)\}$.

\end{remark}

Note that  \cref{eq.EB1} and  \cref{eq.EB2} are clearly connected. Indeed, if \cref{eq.EB1} holds, we can get  \cref{eq.EB2} satisfied by setting $f_i(z) \coloneqq \dist(z,C_i)$, for $i=1,\ldots, m$. Now, since convex functions are locally Lipschitz continuous, we get that  \cref{eq.EB2} always implies  \cref{eq.EB1}. 


\begin{definition}[Convergence rate]
\label[definition]{def.LinearConvergence}
Let $\{x^k\} \subset \re^n$ be a sequence converging to some point $\Bar{x} \in \re^n$. Assume that $x^k \neq \Bar{x}$ for all $k\in \mathds{N}$. Define
\begin{equation}
\label{eq.LinearConvergence}
    \xi\coloneqq \limsup_{k \rightarrow \infty}  \frac{\|x^{k+1}-\Bar{x}\|}{\|x^k-\Bar{x}\|},\text{ and }\rho \coloneqq  \limsup_{k \rightarrow \infty}  \| x^k -\Bar{x}\|^{\frac{1}{k}}.
\end{equation}
Then, the convergence of $\{x^k\}$ is
\begin{listi}
    \item \emph{Q-linear} if $\xi\in (0,1)$;   
     \item \emph{Q-superlinear} if $\xi=0$,
    \item \emph{R-linear} if $\rho\in (0,1)$.
\end{listi}
\end{definition}

It is long-familiar that Q-linear convergence is a sufficient condition for R-linear convergence (with the same asymptotic constant), but the
converse statement does not hold true~\cite{Ortega:2000}.


The next lemma claims the linear convergence of Fej\'er monotone sequences.

\begin{lemma}[{Fejér monotonicity and linear convergence \cite[Prop.~3.7]{Behling:2021}}]
\label[lemma]{lem.ConvergenceRate_Fejermonoton}
If the sequence $\{x^k\}\subset \re^n$ is Fej\'er Monotone with respect to a set $M\subset \re^n$, and the scalar sequence $\{\dist(x^k,M)\}$ converges {Q-linearly} to 0, then $\{x^k\}$ converges {R-linearly} to a point $\Bar{x}\in M$. 
\end{lemma}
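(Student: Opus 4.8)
The plan is to produce a single geometric inequality that bounds $\|z^k-\bar z\|$ by $\dist(z^k,M)$, and then to transfer the Q-linear decay of $\{\dist(z^k,M)\}$ to an R-linear decay of $\{z^k\}$. Throughout I take $M$ to be nonempty and closed (the relevant case, since in our applications $M=C=\cap_{i=1}^mC_i$), so that for each $k$ there is a nearest point $p^k\in M$ with $\|z^k-p^k\|=\dist(z^k,M)$.

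The first and central step is to show that for every pair of indices $m>k$,
\[
\|z^m-z^k\|\le 2\,\dist(z^k,M).
\]
Since $p^k\in M$, Fej\'er monotonicity applied along the iterates from index $k$ up to index $m$ gives $\|z^m-p^k\|\le\|z^k-p^k\|=\dist(z^k,M)$, and the triangle inequality $\|z^m-z^k\|\le\|z^m-p^k\|+\|p^k-z^k\|$ then yields the claim. Because $\dist(z^k,M)\to 0$ by hypothesis, this bound shows that $\{z^k\}$ is Cauchy, hence convergent to some $\bar z\in\re^n$; moreover $\dist(\bar z,M)=\lim_k\dist(z^k,M)=0$ and $M$ is closed, so $\bar z\in M$. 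Letting $m\to\infty$ in the displayed inequality produces the key estimate
\[
\|z^k-\bar z\|\le 2\,\dist(z^k,M),\qquad\text{for all }k.
\]

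It remains to unwind the Q-linear convergence of $\{\dist(z^k,M)\}$. By Definition \ref{def.LinearConvergence} we have $\limsup_k \dist(z^{k+1},M)/\dist(z^k,M)=q\in(0,1)$, so for any $\tilde q$ with $q<\tilde q<1$ there is an index $K$ with $\dist(z^{k+1},M)\le\tilde q\,\dist(z^k,M)$ for all $k\ge K$. Iterating gives $\dist(z^k,M)\le \tilde q^{\,k-K}\dist(z^K,M)$, i.e. a bound of the form $\dist(z^k,M)\le c\,\tilde q^{\,k}$ with $c$ a constant independent of $k$. Combining this with the key estimate yields $\|z^k-\bar z\|\le 2c\,\tilde q^{\,k}$, so that $\limsup_k\|z^k-\bar z\|^{1/k}\le\tilde q$. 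Since $\tilde q$ may be taken arbitrarily close to $q$, we conclude $r=\limsup_k\|z^k-\bar z\|^{1/k}\le q<1$, which is exactly R-linear convergence.

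The only real subtlety, and the step I expect to carry the argument, is the inequality $\|z^m-z^k\|\le 2\,\dist(z^k,M)$: it is what upgrades mere Fej\'er monotonicity together with vanishing distance into a quantitative rate, and it crucially exploits that the nearest point $p^k$ lies in $M$, so that Fej\'er monotonicity may be applied to it. By comparison, translating the $\limsup$-type Q-linear definition into an honest geometric bound $c\,\tilde q^{\,k}$ is routine, but it must be carried out in order to reach the $k$-th-root limit that defines R-linear convergence.
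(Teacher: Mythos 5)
Your proof is correct, and your explicit restriction to nonempty closed $M$ is both necessary (the stated conclusion $\Bar{z}\in M$ can fail for non-closed $M$, e.g.\ $M=(0,1]$, $z^k=-2^{-k}$) and harmless, since $M=C=\cap_{i=1}^m C_i$ in every application. The paper gives no proof of its own---it simply cites Proposition 3.8 of \cite{Behling.1}---and your argument, built on the inequality $\|z^k-\Bar{z}\|\le 2\dist(z^k,M)$ obtained by applying Fej\'er monotonicity to the nearest point $p^k\in M$ and then converting the $\limsup$ ratio bound into a geometric bound $c\,\tilde q^{\,k}$, is precisely the standard argument underlying that cited result.
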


\subsection{Linear convergence rate of Algorithm 2}

Before proving the linear convergence rate of Algorithm 2, we  recall the proof of the linear convergence for SePM and {SiPM} with the most violated constraint control related to \cref{eq.MostViolated1}. We note that, taking into account \cref{def.EB1,def.EB2}, we assume   $\omega \in (0,1)$. 

\begin{lemma}[Linear convergence of SePM and SiPM under EB1]
\label[lemma]{lem.LinearConvergence_SiPM_SePM_EB1} 
Let $C_1,\ldots,C_m\subset \re^n$ be  nonempty closed convex sets, and suppose $C\coloneqq \bigcap_{i=1}^m C_i \neq \emptyset$
Assume that \cref{eq.EB1} holds at $\Bar{z} \in C $. 
Let $B$ be a ball centered at $\Bar{z}$ and contained in $V$. Let $z\in B$ and define $\beta \coloneqq \sqrt{1-\omega^2}$. Then, 
\begin{equation}
    \label{eq.LinearConvergence_SiPM_SePM_EB1.1}
        \dist(Z_{C_{r},C_{\ell}}(z),C) \leq \beta^2 \dist(z,C),
    \end{equation}
and 
    \begin{equation}
    \label{eq.LinearConvergence_SiPM_SePM_EB1.2}
        \dist(\Tilde{Z}_{C_{r},C_{\ell}}(z),C) \leq \left(\frac{1+\beta}{2}\right)\dist(z,C),
    \end{equation}
where $\ell  \coloneqq \argmaxA\limits_{1\leq i \leq m} \{\dist(z, C_i)\} $ and  $ r \coloneqq \argmaxA\limits_{1\leq i \leq m} \{\dist(P_{C_{\ell}}(z),  {C_i}) \}$. 
 
\end{lemma}
\begin{proof}
    We start with the SePM case. Note that
        \begin{align}
            {\dist}^2(z,C) & = \|z-P_C(z)\|^2  \geq \| P_{C_{\ell}}(z) - P_C(z)\|^2 +\|z-P_{C_{\ell}}(z) \|^2 \\
            & \geq {\dist}^2(P_{C_{\ell}}(z),C) + {\dist}^2(z,C_{\ell}) \\
            & = {\dist}^2(P_{C_{\ell}}(z),C) + \max_{1\leq i \leq m}{\dist}^2(z,C_{i}) \\
            & \geq {\dist}^2(P_{C_{\ell}}(z),C) +\omega^2 {\dist}^2(z,C),
        \end{align}
    where the first inequality holds by \Cref{lem.Propery_Projection}(iii), in the second one we use the definition of $P_{C_{\ell}}$, the last equality follows from the definition of $\ell$, and the third inequality follows by  \cref{eq.EB1}. Hence,
    \begin{equation}
    \label{eq.LinearConvergence_SiPM_SePM_EB1.4}
        \dist(P_{C_{\ell}}(z),C) \leq \sqrt{1-\omega^2} \dist(z,C) =\beta \dist(z,C).
    \end{equation}
    Since $\Bar{z} \in C$, by the nonexpansiveness of $P_{C_{\ell}}$ and $P_{C_{r}}$, we have
    \begin{equation}
    \label{eq.LinearConvergence_SiPM_SePM_EB1.5}
        \|P_{C_{r}}(P_{C_{\ell}}(z)) - \Bar{z}\| \leq \|P_{C_{\ell}}(z) -\Bar{z} \| \leq \|z-\Bar{z}\|,
    \end{equation}
    hence, $P_{C_{r}}(P_{C_{\ell}}(z)) \in B$. Consequently,
  
    \begin{align}
        {\dist}^2(P_{C_{\ell}}(z),C) & = \|P_{C_{\ell}}(z)-P_C(P_{C_{\ell}}(z)) \|^2 \\
        & \geq \|P_{C_{r}}(P_{C_{\ell}}(z)) - P_C(P_{C_{\ell}}(z))\|^2  + \|P_{C_{\ell}}(z) - P_{C_{r}}(P_{C_{\ell}}(z))\|^2 \\
         & \geq {\dist}^2(P_{C_{r}}(P_{C_{\ell}}(z)),C)   +\max_{1\leq i \leq m} {\dist}^2(P_{C_{i}}(z),C_{r}) \\
        & \geq {\dist}^2(P_{C_{r}}(P_{C_{\ell}}(z)),C) + \omega^2 {\dist}^2(P_{C_{\ell}}(z),C),  \label{eq.LinearConvergence_SiPM_SePM_EB1.6}
    \end{align}
    where the first inequality follows from \Cref{lem.Propery_Projection}(iii), the second follows from the definition of $r$,    and the third one from \cref{eq.EB1}. From \cref{eq.LinearConvergence_SiPM_SePM_EB1.6}, we obtain 
    \begin{equation}
    \label{eq.LinearConvergence_SiPM_SePM_EB1.7}
        \dist(P_{C_{r}}(P_{C_{\ell}}(z)),C) \leq \sqrt{1-\omega^2} \dist (P_{C_{\ell}}(z),C) = \beta \dist (P_{C_{\ell}}(z),C).
    \end{equation}
    Now combining \cref{eq.LinearConvergence_SiPM_SePM_EB1.5} and \cref{eq.LinearConvergence_SiPM_SePM_EB1.7}, we get
    \begin{equation}
    \label{eq.LinearConvergence_SiPM_SePM_EB1.8}
    \begin{split}
        \dist(Z_{C_{r},C_{\ell}}(z),C) & =  \dist(P_{C_{r}}(P_{C_{\ell}}(z)),C)  \leq  \sqrt{1-\omega^2} \dist (P_{C_{\ell}}(z),C) \\
        & \leq (1-\omega^2) \dist (z,C) =\beta^2 \dist(z,C),  
    \end{split}
    \end{equation}
    which establishes \cref{eq.LinearConvergence_SiPM_SePM_EB1.1}.

    Next, we will establish the linear convergence for SiPM with \cref{eq.EB1}. By the nonexpansiveness of $P_{C_{r}}$, we have that
    \begin{equation}
    \label{eq.LinearConvergence_SiPM_SePM_EB1.9}
        \dist(P_{C_{r}}(z),C) \leq \dist (z, C).
    \end{equation}
    Note that
    \begin{equation}
    \label{eq.LinearConvergence_SiPM_SePM_EB1.10}
        \begin{split}
            \dist(\Tilde{Z}_{C_{r},C_{\ell}}(z),C) & =\dist \left(\frac{1}{2}[P_{C_{\ell}}(z)+P_{C_{r}}(z)],C\right) \\
            & \leq \frac{1}{2} \left [ \dist (P_{C_{\ell}}(z),C) +\dist(P_{C_{r}}(z),C) \right ] \\
            & \leq \left( \frac{1+\beta}{2} \right) \dist(z,C), \\  
        \end{split}
    \end{equation}
    using the convexity of the distance function to $C$ in the first inequality, and \cref{eq.LinearConvergence_SiPM_SePM_EB1.4,eq.LinearConvergence_SiPM_SePM_EB1.9} in the second one. Hence, we get \cref{eq.LinearConvergence_SiPM_SePM_EB1.2}, and the results hold.\end{proof}

\begin{corollary}[Linear convergence of SePM and SiPM under EB1]
\label[corollary]{cor.LineaConvergence_SePM_SiPM_CFP_mSets}
Let $C_1,C_2,\ldots,C_m\subset \re^n$ be nonempty closed convex sets, and assume that $C\coloneqq \bigcap_{i=1}^m C_i\neq \emptyset$. Suppose $\{s^k\}$ and $\{y^k\}$ are  sequences generated by SePM and SiPM starting from some $s^0 \in \re^n$ and $y^0\in \re^n$, respectively. Assume also $\{s^k\}$ and $\{y^k\}$ are both infinite sequences. If \cref{eq.EB1} holds at the limit points $\Bar{s}$ of $\{s^k\}$, $\Bar{y}$ of $\{y^k\}$, then the sequences $\{s^k\}$, $\{y^k\}$ converge R-linearly, with asymptotic constants bounded above by $\beta^2$, $\frac{1+\beta}{2}$ respectively, where $\beta=\sqrt{1-\omega^2}$ and $\omega$ is the constant in \cref{def.EB1}. 
\end{corollary}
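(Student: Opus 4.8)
The plan is to combine three facts that are already in place: the Fej\'er monotonicity of both sequences with respect to $C$, the Q-linear decay of the distance-to-$C$ sequences furnished by Corollary \ref{cor.LinearConvergence_SiMP_SeMP_EB1.1}, and the transfer principle of Lemma \ref{lem.ConvergenceRate_Fejermonoton}, which upgrades Q-linear decay of distances into R-linear convergence of a Fej\'er monotone sequence.

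First I would check Fej\'er monotonicity of $\{s^k\}$ and $\{y^k\}$ with respect to $C$. Since $C\subset C_i$ for every $i$, each $s\in C$ lies in every $C_i$, so Lemma \ref{lem.Propery_Projection}$(i)$ gives $\|P_{C_i}(z)-s\|\le\|z-s\|$ for all $s\in C$. For SePM the operator is the composition $P_{C_m}\circ\cdots\circ P_{C_1}$ of such maps, so $\|s^{k+1}-s\|\le\|s^k-s\|$; for SiPM the averaging estimate used in Lemma \ref{lem.Fejer_monotonicity_SiPM}, applied to the $m$-fold average, gives $\|y^{k+1}-s\|\le\|y^k-s\|$. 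Hence both sequences are Fej\'er monotone with respect to $C$.

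Next, Corollary \ref{cor.LinearConvergence_SiMP_SeMP_EB1.1} shows that, under EB1 at the respective limit points, $\{\dist(s^k,C)\}$ and $\{\dist(y^k,C)\}$ converge Q-linearly to $0$ with asymptotic constants $\beta^2$ and $\tfrac{1+\beta}{2}$. Feeding this into Lemma \ref{lem.ConvergenceRate_Fejermonoton} immediately yields R-linear convergence of $\{s^k\}$ and $\{y^k\}$ to limits $\Bar s,\Bar y\in C$.

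The one point that needs care is extracting the precise asymptotic constants, and here I would make the transfer quantitative. The engine behind Lemma \ref{lem.ConvergenceRate_Fejermonoton} is the estimate $\|z^k-\Bar z\|\le 2\,\dist(z^k,C)$: for $\ell\ge k$, Fej\'er monotonicity at the point $P_C(z^k)\in C$ gives $\|z^\ell-P_C(z^k)\|\le\|z^k-P_C(z^k)\|=\dist(z^k,C)$, and letting $\ell\to\infty$ gives $\|\Bar z-P_C(z^k)\|\le\dist(z^k,C)$, so the triangle inequality yields $\|z^k-\Bar z\|\le 2\,\dist(z^k,C)$. Combining with $\dist(s^k,C)\le(\beta^2)^k\dist(s^0,C)$ and $\dist(y^k,C)\le(\tfrac{1+\beta}{2})^k\dist(y^0,C)$, taking $k$-th roots and passing to the $\limsup$, the prefactors $(2\,\dist(s^0,C))^{1/k}$ and $(2\,\dist(y^0,C))^{1/k}$ tend to $1$, leaving R-linear rates bounded above by $\beta^2$ and $\tfrac{1+\beta}{2}$ respectively. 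I expect this last step --- verifying that the constant $2$ and the initial distances do not corrupt the asymptotic rate --- to be the only mildly delicate part, but since they enter solely through a $k$-th root it is harmless.
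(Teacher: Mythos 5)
Your proposal is correct and follows essentially the same route as the paper: Fej\'er monotonicity via Lemma \ref{lem.Propery_Projection}$(i)$ and Lemma \ref{lem.Fejer_monotonicity_SiPM}, the Q-linear distance decay from Corollary \ref{cor.LinearConvergence_SiMP_SeMP_EB1.1}, and the transfer to R-linear convergence through Lemma \ref{lem.ConvergenceRate_Fejermonoton}. Your final paragraph, which reproves the transfer quantitatively via $\|z^k-\Bar z\|\le 2\,\dist(z^k,C)$, is a sound but unnecessary unpacking of Lemma \ref{lem.ConvergenceRate_Fejermonoton}, which the paper simply cites.
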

\begin{proof}
    Convergence of $\{s^k\}$ and $\{y^k\}$ to points $\Bar{s} \in C$ and $\Bar{y} \in C$ follows from \cite[Corollary 3.3(i)]{Bauschke:1996} and  in \cite[Theorem 3]{DePierro:1985}, respectively. Hence, for large enough $k$, $s^k$ belongs to a ball centered at $\Bar{s}$ contained in $V$, and $y^k$ belongs to a ball centered at $\Bar{y}$ contained in $V$.

    In view of the definitions of the SePM and SiPM sequences, we get from \Cref{lem.LinearConvergence_SiPM_SePM_EB1}, 
    \begin{equation}
        \frac{\dist(s^{k+1},C)}{\dist(s^k,C)} \leq \beta^2,~~~\frac{\dist(y^{k+1},C)}{\dist(y^k,C)} \leq \frac{1+\beta}{2}.
    \end{equation}
    Now, from \Cref{def.LinearConvergence}, we get 
    \begin{equation}
        \dist(s^k,C) \leq \left(\frac{1+\beta}{2}\right)^{k} \dist(s^0,C), \quad \text{ and} \quad  \dist(y^k,C) \leq \beta^{2k} \dist(y^0,C).
    \end{equation}
 Hence, both distance sequences $\{\dist(s^k, C)\}$ and $\{\dist(y^k,C)\}$ converge {Q-linearly} to $0$, with asymptotic constants given by $\beta^2$,  $\frac{1+\beta}{2}$, respectively, since $\beta\in (0,1)$, because $\omega\in(0,1)$.

    The fact that $\{s^k\}$ and $\{y^k\}$ are Fej\'er monotone with respect to $C$ is an immediate consequence of \Cref{lem.Propery_Projection}(ii) and \Cref{lem.Fejer_monotonicity_SiPM}, respectively. Then, the result follows then from \Cref{lem.ConvergenceRate_Fejermonoton}.
\end{proof}

We now proceed to prove the linear convergence of s-cCRM. First, we show  the linear rate for the most violated constraint control (Algorithm 2) related to \cref{eq.MostViolated1}, in view of \cref{eq.EB1}.

\begin{lemma}[Linear convergence of the distance for s-cCRM (Algorithm 2) under EB1]
    \label[lemma]{lem.Linear_convergence_cCRM_distanceSequecn_EB1}
    Let $C_1,C_2,\ldots,C_m$ be nonempty closed convex sets, and assume that $C\coloneqq \bigcap_{i=1}^m C_i\neq \emptyset$. Assume that \cref{eq.EB1} at $\Bar{z} \in C $.  Let $B$ be a ball centered at $\Bar{z}$ and contained in $V$. Let $z\in B$ and define $\beta \coloneqq \sqrt{1-\omega^2}$. Then,  
\begin{equation}
\label{eq.Linear_convergence_cCRM_distanceSequecn_EB1.1}
    \dist(T_{C_{r},C_{\ell}}(z),C) \leq  \beta^2 \dist(z,C),  
\end{equation}
where $\ell  \coloneqq \argmaxA\limits_{1\leq i \leq m} \{\dist(z, C_i)\} $ and  $ r \coloneqq \argmaxA\limits_{1\leq i \leq m} \{\dist(P_{C_{\ell}}(z),  {C_i}) \}$.

\end{lemma}

\begin{proof}
     By \cref{eq.LinearConvergence_SiPM_SePM_EB1.5}, we know that $Z_{C_{r},C_{\ell}}(z) \in B$. Using the definition of $\Bar{Z}_{C_{r},C_{\ell}}$, we have
    \begin{align}
        \dist(\Bar{Z}_{C_{r},C_{\ell}}(z),C) & = \dist\left(\frac{1}{2}\right(Z_{C_{r},C_{\ell}}(z)+P_{C_{\ell}} (Z_{C_{r},C_{\ell}}(z))\left),C\right) \\
        & \leq \frac{1}{2}\dist(Z_{C_{r},C_{\ell}}(z),C)   +\frac{1}{2}\dist(P_{C_{\ell}} (Z_{C_{r},C_{\ell}}(z)),C) \\
        & \leq \frac{1}{2}\beta^2 \dist(z,C)+ \frac{1}{2}\beta^2 \dist(z,C)\\
        & = \beta^2 \dist(z,C),\label{eq.Linear_convergence_cCRM_distanceSequecn_EB1.2}
    \end{align}
where the first inequality follows from the convexity of the distance function, and the second from \Cref{lem.Propery_Projection}(iv) and \cref{eq.LinearConvergence_SiPM_SePM_EB1.1}. 

By \Cref{lem.Fejermonotonicty_Circ}, we have 
\begin{equation}
\label{eq.Linear_convergence_cCRM_distanceSequecn_EB1.3}
    \| \pCRMOp_{C_{r},C_{\ell}}(z) -s\| \leq \|z-s\|,
\end{equation}
for any centralized point $z\in B$ and for any $s \in C$. By \Cref{lem.CentralizedProcedure} and \cref{eq.LinearConvergence_SiPM_SePM_EB1.5},  $\Bar{Z}_{C_{r},C_{\ell}}(z)$ is centralized with respect to $C_{\ell}$ and $C_{r}$ and belongs to $B$. So, taking $\Bar{Z}_{C_{r},C_{\ell}}(z)$, we get
\begin{equation}
    \label{Linear_convergence_cCRM_distanceSequecn_EB1.4}
    \| T_{C_{r},C_{\ell}}(z) -s\| = \norm{\pCRMOp_{C_{r},C_{\ell}}(\Bar{Z}_{C_{r},C_{\ell}}(z)) - s } \leq \|\Bar{Z}_{C_{r},C_{\ell}}(z)-s\|,
\end{equation}
where the inequality follows from \Cref{lem.FirmlyNonexpansiveness_Cirmcum}. Using the definition of distance between $T_{C_{r},C_{\ell}}(z)$ and $C$, we get
\begin{equation}
    \label{eq.Linear_convergence_cCRM_distanceSequecn_EB1.5}
    \dist(T_{C_{r},C_{\ell}}(z),C) \leq \|\Bar{Z}_{C_{r},C_{\ell}}(z)-s\|.
\end{equation}
Take $s=P_C(\Bar{Z}_{C_{r},C_{\ell}}(z))$, then
\begin{equation}
\label{eq.Linear_convergence_cCRM_distanceSequecn_EB1.6}
    \dist(T_{C_{r},C_{\ell}}(z),C) \leq \dist (\Bar{Z}_{C_{r},C_{\ell}}(z),C),
\end{equation}
for all $z\in B$. Combining \cref{eq.Linear_convergence_cCRM_distanceSequecn_EB1.2} and \cref{eq.Linear_convergence_cCRM_distanceSequecn_EB1.6}, we obtain
\begin{equation}
    \begin{split}
        \dist(T_{C_{r},C_{\ell}}(z),C) & \leq \beta^2 \dist(z,C),
    \end{split}
\end{equation}
which establishes the result.\end{proof}

Now, we are going to establish linear convergence of Algorithm 2, under \cref{eq.EB1}.
\begin{theorem}[Linear convergence of s-cCRM (Algorithm 2) under EB1]
\label{thm.LinearConvergence_cCRM_MostViolated1}
Let $C_1,\ldots,C_m\subset \re^n$ be  nonempty closed convex sets, and suppose that $C\coloneqq \bigcap_{i=1}^m C_i \neq \emptyset$. Assume that sequence $\{z^k\}$ is generated by s-cCRM with the most violated constraint control sequence (distance version) as in \cref{eq.MostViolated1}, starting from some $z^0\in \re^n$. Assume also that $\{z^k\}$ is an infinite sequence. If \cref{eq.EB1} holds at the limit $\Bar{z}$ of $\{z^k\}$, then $\{z^k\}$ converges to $\Bar{z}\in C$ R-linearly, with asymptotic constant bounded above $\beta^2$, where $\beta = \sqrt{1-\omega^2}$ and $\omega$ is the constant from \cref{def.EB1}.
\end{theorem}
\begin{proof}
    The convergence of $\{z^k\}$ to a point $\Bar{z} \in C$ follows from \Cref{thm.Convergence_MostViolated1}. Hence, for large enough $k$, $z^k$ belongs to the ball centered at $\Bar{z}$ and contained in $V$, whose existence is ensured in \cref{eq.EB1}.

We recall that the s-cCRM sequence is defined as $z^{k+1}=T_{C_{r(k)},C_{\ell(k)}}(z^k)$, so that it follows from \Cref{lem.Linear_convergence_cCRM_distanceSequecn_EB1} that
\begin{equation}
\label{eq.thm4}
    \frac{\dist(z^{k+1},C)}{\dist(z^{k},C)} \leq \beta^2.
\end{equation}
Since $\omega\in (0,1)$ implies that $\beta^2\in(0,1)$, it follows immediately from \cref{eq.thm4} that the scalar sequence $\{\dist(z^k,C)\}$ converges Q-linearly to $0$ with asymptotic constant bounded above by $\beta^2$.

Finally, recall that sequence $\{z^k\}$ is Fej\'er monotone with respect to $C$, due to \Cref{cor.Fejermonotonicity_cCRM_m-sets_CFP.1}. The R-linear convergence of $\{z^k\}$ to some point in $C$ and the value of the upper bound of the asymptotic follow from \Cref{lem.ConvergenceRate_Fejermonoton}.\end{proof}

\subsection{Linear convergence of Algorithm 3}

Before proving the linear convergence of s-cCRM (Algorithm 3) under \cref{eq.EB2}, we need some lemmas about the relationship between \cref{eq.EB1} and \cref{eq.EB2}. First we provide a bound for the norm of subdifferentials under \cref{eq.EB2}.

\begin{lemma}[Bound of subdifferentials under EB2]
\label[lemma]{lem.Subgradient_is_LocallyBounded}
Let $C_1,\ldots,C_m\subset \re^n$ be  nonempty closed convex sets, and suppose that $C\coloneqq \bigcap_{i=1}^m C_i \neq \emptyset$.
    Assume that  \cref{eq.EB2} holds at a point $\Bar{z}\in C$. Then, there exists a ball $B$ centered at $\Bar{z}$ and a constant $\epsilon >0$ such that
    \begin{equation}       \label{eq.Subgradient_is_LocallyBounded}
        \|v_i(u)\| \leq \epsilon,
    \end{equation}
    for all $u\in B$, all $v_i(u)\in \partial f_i(u)$, and all $i\in\{1,\ldots,m\}$. 
\end{lemma}
\begin{proof}
    Take an arbitrary point $u\in \re^n$, and let $v_i(u)\in \re^n$ be any subgradient of $f_i$ at $u$. 
    Now, recall that the convex function $f_i$ is locally Lipschitz continuous in $\re^n$ for all $i \in \{1,\ldots,m\}$. Using the fact that the subdifferentials of $f_i$'s are locally bounded in $\re^n$, we establish the result.    
\end{proof}

\begin{lemma}[Relation betweem EB1 and EB2]
\label[lemma]{lema.CorollaryOfEB2}
Let $C_1,\ldots,C_m\subset \re^n$ be  nonempty closed convex sets, and suppose that $C\coloneqq \bigcap_{i=1}^m C_i \neq \emptyset$. Assume that \cref{eq.EB2} holds at point $\Bar{z}\in C$. Then, there exists $\epsilon >0$ such
\begin{equation}
\label{eq.CorollaryOfEB2.1}
    f_i(z) \leq \epsilon \dist(z,C_i),
\end{equation}
for all $i\in\{1,\ldots,m\}$, and all $z$ in the neighborhood $V$ of $\Bar{z}$ as defined in \cref{def.EB2}. Moreover,
\begin{equation}
    \label{eq.CorollaryOfEB2.2}
    \dist(z,C)\leq \frac{\epsilon}{\omega} \dist(z,C_{\ell}),
\end{equation}
where $\ell  \coloneqq  \argmaxA\limits_{1\leq i \leq m} \{f_i(z)\}$.
\end{lemma}
\begin{proof}
    By \Cref{lem.Subgradient_is_LocallyBounded}, there exists a ball $B$ contained in $V$, centered at $\Bar{z}$, and a constant $\epsilon$ such that
\begin{equation}
    \label{eq.CorollaryOfEB2.3}
    \|v_i(u)\| \leq \epsilon,
\end{equation}
for all $v_i(u) \in \partial f_i(u)$, and for all $u\in B$, and for each index $i\in\{1,\ldots,m\}$. Take any $z\in B$ and let $z_i \coloneqq  P_{C_i}(z)$. Using \Cref{lem.MVT_ConvexFunction}, we get 
\begin{equation}
\label{eq.CorollaryOfEB2.4}
    f_i(z) = f_i(z_i) + \scal{v_i(u_i)}{z-z_i},
\end{equation}
for some $u_i$ in the line segment between $z$ and $z_i$ and, $v_i(u_i) \in \partial f(u_i)$. Since $z_i \in C_i$, we have $f_i(z_i)=0$, and it follows from \cref{eq.CorollaryOfEB2.4} that
\begin{equation}
\label{eq.CorollaryOfEB2.5}
    f_i(z) \leq \|v_i(u_i)\|\|z-z_i\|=\|v_i(u_i)\| \dist(z,C_i).
\end{equation}
By the nonexpansiveness of the orthogonal projection, we have
\begin{equation}
    \label{eq.CorollaryOfEB2.6}
    \|P_{C_i}(z) - P_{C_i}(\Bar{z})\| \leq \|z-\Bar{z}\|.
\end{equation}
The definition of $z_i$ and $\Bar{z} \in C$, yields
\begin{equation}
    \label{eq.CorollaryOfEB2.7}
    \|z_i-\Bar{z}\| \leq \|z-\Bar{z}\|,
\end{equation}
which shows that $z_i \in B$. Hence, $u_i\in B$, by the convexity of the ball. In view of \cref{eq.CorollaryOfEB2.3} and \cref{eq.CorollaryOfEB2.5}, we have
\begin{equation}
\label{eq.CorollaryOfEB2.8}
    f_i(z) \leq \epsilon \dist(z,C_i),
\end{equation}
for each $i=1,\ldots,m$ and each $z\in B$. Therefore,
\begin{equation}
    \label{eq.CorollaryOfEB2.9}
    f_{\ell}(z) \leq \epsilon \dist(z,C_{\ell}),
\end{equation}
so that, in view of \cref{eq.EB2}, it holds that 
\begin{equation}
    \label{eq.CorollaryOfEB2.10}
    \dist(z,C) \leq \frac{\epsilon}{\omega} f_{\ell}(z) \leq \frac{\epsilon}{\omega} \dist(z,C_{\ell}),
\end{equation}
which establishes the result.\end{proof}

\begin{remark}
    With arguments very similar to those used in the previous lemma, together with the nonexpansiveness of projection operator, we can easily get
\begin{equation}
\label{eq.remark_EB2}
    \dist(P_{C_{\ell}}(z),C) \leq \frac{\epsilon}{\omega} \dist(P_{C_{\ell}}(z),C_{r}),
\end{equation}
where $r \coloneqq \argmaxA\limits_{1\leq i \leq m} \{f_i(P_{C_{\ell}}(z)) \}$.
\end{remark} 
\begin{lemma}[Linear convergence of the distance for SePM and SiPM under EB2]
\label[lemma]{lem.LinearConvergence_SePM_SiPM_EB2_distanceSequence}
Let $C_1,\ldots,C_m\subset \re^n$ be  nonempty closed convex sets, and assume $C\coloneqq \bigcap_{i=1}^m C_i \neq \emptyset$
Assume that \cref{eq.EB2} holds at $\Bar{z} \in C $, and that $B$ is a ball centered at $\Bar{z}$ and contained in $V$. Define $\beta\coloneqq  \sqrt{1-(\frac{\omega}{\epsilon})^2}$, with $\omega$ being the constant in \cref{def.EB2}, and $\epsilon$ being the constant in \Cref{lema.CorollaryOfEB2}. Then,
\begin{equation}
\label{eq.LinearConvergence_SePM_SiPM_EB2_distanceSequence.1}
    \dist(Z_{C_{r},C_{\ell}}(z),C) \leq \beta^2 \dist(z,C),
\end{equation}
for all $z\in B$, and
\begin{equation}
\label{eq.LinearConvergence_SePM_SiPM_EB2_distanceSequence.2}
    \dist(\Tilde{Z}_{C_{r},C_{\ell}}(z),C) \leq \frac{1+\beta}{2} \dist(z,C),
\end{equation}
for all $z\in B$, where $\ell  \coloneqq \argmaxA\limits_{1\leq i \leq m} \{f_i(z)\}$ and  $r \coloneqq \argmaxA\limits_{1\leq i \leq m} \{f_i(P_{C_{\ell}}(z)) \}$. 
\end{lemma}
\begin{proof}
We start with proving \cref{eq.LinearConvergence_SePM_SiPM_EB2_distanceSequence.1}.
Take $z\in B$, and note that
\begin{align}
        {\dist}^2(z,C) & = \| z-P_C(z)\|^2 \\  & \geq \|P_{C_{\ell}}(z) - z \|^2 + \|P_{C_{\ell}}(z) - P_C(z)\|^2 \\
        & \geq  {\dist}^2(z,C_{\ell}) +{\dist}^2 (P_{C_{\ell}}(z),C) \\
        & \geq {\dist}^2 (P_{C_{\ell}}(z),C) + \frac{\omega^2}{\epsilon^2} {\dist}^2(z,C), 
        \label{eq.LinearConvergence_SePM_SiPM_EB2_distanceSequence.3}
\end{align}
using \Cref{lem.Propery_Projection}(iii) in the first inequality, the definition of orthogonal projection in the second one, and \cref{eq.CorollaryOfEB2.2} in the third one. Hence, 
\begin{equation}
\label{eq.LinearConvergence_SePM_SiPM_EB2_distanceSequence.4}
    \dist (P_{C_{\ell}}(z),C) \leq \sqrt{1-\frac{\omega^2}{\epsilon^2}}\dist(z,C) = \beta \dist(z,C).
\end{equation}
By \cref{eq.LinearConvergence_SiPM_SePM_EB1.5}, we get $P_{C_{r}}(P_{C_{\ell}}(z)) \in B$. Therefore, 
\begin{equation}
\label{eq.LinearConvergence_SePM_SiPM_EB2_distanceSequence.5}
    \begin{split}
        {\dist}^2(P_{C_{\ell}}(z),C) & = \|P_{C_{\ell}}(z) - P_C(P_{C_{\ell}}(z))\|^2 \\ 
        & \geq \|P_{C_{r}}(P_{C_{\ell}}(z)) - P_C(P_{C_{\ell}}(z))\|^2  \\
        & + \|P_{C_{r}}(P_{C_{\ell}}(z)) - P_{C_{\ell}}(z)\|^2 \\
        & \geq {\dist}^2(P_{C_{r}}(P_{C_{\ell}}(z)),C)+ {\dist}^2(P_{C_{\ell}}(z),C_{r})   \\
        & \geq {\dist}^2(P_{C_{r}}(P_{C_{\ell}}(z)),C)  + \frac{\omega^2}{\epsilon^2} {\dist}^2(P_{C_{\ell}}(z),C),
    \end{split}
\end{equation}
using \Cref{lem.Propery_Projection}(iii) in the first inequality, the definition of orthogonal projection in the second one, and \cref{eq.remark_EB2} in the third one. Thus, we obtain
\begin{equation}
\label{eq.LinearConvergence_SePM_SiPM_EB2_distanceSequence.6}
    \dist(P_{C_{r}}(P_{C_{\ell}}(z)),C) \leq \beta \dist(P_{C_{\ell}}(z)),C).
\end{equation}
Together with \cref{eq.LinearConvergence_SePM_SiPM_EB2_distanceSequence.4}, we have 
\begin{equation}
\label{eq.LinearConvergence_SePM_SiPM_EB2_distanceSequence.7}
        \dist(Z_{C_{r},C_{\ell}}(z),C) = \dist(P_{C_{r}}(P_{C_{\ell}}(z)),C) \leq \beta^2 \dist (z,C).
\end{equation}
Next, we prove \cref{eq.LinearConvergence_SePM_SiPM_EB2_distanceSequence.2}. By the nonexpansiveness of $P_{C_{r}}$, we have
\begin{equation}
\label{eq.LinearConvergence_SePM_SiPM_EB2_distanceSequence.9}
    \dist(P_{C_{r}}(z),C) \leq \dist(z,C).
\end{equation}
Note that
\begin{align}
        \dist(\Tilde{Z}_{C_{r},C_{\ell}}(z),C) & = \dist(\frac{1}{2}[P_{C_{\ell}}(z) + P_{C_{r}}(z)],C) \\
        & \leq \frac{1}{2} [\dist(P_{C_{\ell}}(z),C) + \dist(P_{C_{r}}(z),C)] \\
        & \leq \left(\frac{1+\beta}{2}\right) \dist(z,C).\label{eq.LinearConvergence_SePM_SiPM_EB2_distanceSequence.8}
\end{align}
The first inequality holds by the convexity of the distance function, and the second one follows by \cref{eq.LinearConvergence_SePM_SiPM_EB2_distanceSequence.9} and \cref{eq.LinearConvergence_SePM_SiPM_EB2_distanceSequence.4}.\end{proof}

\begin{corollary}[Linear convergence of SePM and SiPM under {EB2}]
\label[corollary]{corollary_SePM_SiPM_EB2.2}
Let $C_1,\ldots,C_m\subset \re^n$ be  nonempty closed convex sets, and assume that $C\coloneqq \bigcap_{i=1}^m C_i\neq \emptyset$. Let $\{s^k\}$ and $\{y^k\}$ be sequences generated by SePM and SiPM, starting from some $s^0 \in \re^n$ and $y^0\in \re^n$, respectively. Assume also $\{s^k\}$ and $\{y^k\}$ are both infinite sequences. If \cref{eq.EB2} holds at the limit point $\Bar{s}$ of $\{s^k\}$, $\Bar{y}$ of $\{y^k\}$, then the sequences $\{s^k\}$, $\{y^k\}$ converge R-linearly, with asymptotic constants bounded above by $\beta^2$, $\frac{1+\beta}{2}$ respectively, where $\beta= \sqrt{1-(\frac{\omega}{\epsilon})^2}$, $\omega$ is the constant in \cref{def.EB2}, and $\epsilon$ is the constant in \Cref{lema.CorollaryOfEB2}.
\end{corollary}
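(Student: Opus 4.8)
The plan is to recognize that this statement is precisely the EB2 counterpart of Corollary \ref{cor.LineaConvergence_SeMP_SiMP_CFP_mSets}, and to mirror that short proof, replacing the appeal to the EB1 distance-rate result (Corollary \ref{cor.LinearConvergence_SiMP_SeMP_EB1.1}) by its EB2 analogue, Corollary \ref{cor.LinearConvergence_SiMP_SeMP_EB2.1}. The two ingredients are the Fej\'er monotonicity of each iterate sequence with respect to $C$ and the $Q$-linear decay of the corresponding distance-to-$C$ sequences; these are then combined through the rate-transfer Lemma \ref{lem.ConvergenceRate_Fejermonoton}.

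First I would verify Fej\'er monotonicity with respect to $C$. For the SeMP iterate $s^{k+1}=P_{C_m}\circ\cdots\circ P_{C_1}(s^k)$, every $s\in C$ is fixed by each factor $P_{C_i}$, so applying Lemma \ref{lem.Propery_Projection} $(i)$ successively yields $\|s^{k+1}-s\|\le\|s^k-s\|$ for all $s\in C$. For the SiMP iterate, the same conclusion is exactly Lemma \ref{lem.Fejer_monotonicity_SiPM} (or the identical triangle-inequality and convexity estimate in the $m$-set case). Hence both $\{s^k\}$ and $\{y^k\}$ are Fej\'er monotone with respect to $C$.

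Second, I would invoke Corollary \ref{cor.LinearConvergence_SiMP_SeMP_EB2.1}: under EB2 at the respective limit points $\Bar{s}$ and $\Bar{y}$, the scalar sequences $\{\dist(s^k,C)\}$ and $\{\dist(y^k,C)\}$ converge $Q$-linearly to $0$, with asymptotic constants bounded above by $\beta^2$ and $\frac{1+\beta}{2}$, where $\beta=\sqrt{1-(\omega/\epsilon)^2}$. Finally, applying Lemma \ref{lem.ConvergenceRate_Fejermonoton} to each Fej\'er-monotone sequence whose distance to $C$ decays $Q$-linearly gives $R$-linear convergence of $\{s^k\}$ and $\{y^k\}$ to points in $C$, with asymptotic constants inherited from the distance sequences, namely $\beta^2$ and $\frac{1+\beta}{2}$.

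I do not expect a genuine obstacle here, since all the EB2-specific work was already carried out in Lemma \ref{lem.LinearConvergence_SeMP_SiMP_EB2_distanceSequence} and Corollary \ref{cor.LinearConvergence_SiMP_SeMP_EB2.1}. The only point requiring care is bookkeeping: I must ensure that the asymptotic constant genuinely transfers from the $Q$-linear distance rate to the $R$-linear iterate rate. This is exactly the content of Lemma \ref{lem.ConvergenceRate_Fejermonoton} (Proposition~3.8 in \cite{Behling.1}), and it is the same transfer already employed without difficulty in Corollary \ref{cor.LineaConvergence_SeMP_SiMP_CFP_mSets}; no new estimate is needed.
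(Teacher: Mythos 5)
Your proof is correct and follows essentially the same route as the paper's: Fej\'er monotonicity of $\{s^k\}$ and $\{y^k\}$ with respect to $C$ via Lemma \ref{lem.Propery_Projection}~(i) and Lemma \ref{lem.Fejer_monotonicity_SiPM}, combined with the $Q$-linear decay of the distance sequences from Corollary \ref{cor.LinearConvergence_SiMP_SeMP_EB2.1} and the rate-transfer result of Lemma \ref{lem.ConvergenceRate_Fejermonoton}. No gaps; your additional remarks on constant bookkeeping only make explicit what the paper leaves implicit.
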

\begin{proof}
    We invoke again  \cite[Corollary 3.3(i)]{Bauschke:1996} and  \cite[Theorem 3]{DePierro:1985}, to get the convergence of $\{s^k\}$ and $\{y^k\}$ to points $\Bar{s} \in C$ and $\Bar{y} \in C$, respectively. Hence,   $s^k$ belongs to a ball centered at $\Bar{s}$ contained in $V$, and $y^k$ belongs to a ball centered at $\Bar{y}$ contained in $V$ and, for large enough $k$.

    In view of the definitions of the SePM and SiPM sequences, from \Cref{lem.LinearConvergence_SePM_SiPM_EB2_distanceSequence} we derive that
    \begin{equation}
        \frac{\dist(s^{k+1},C)}{\dist(s^k,C)} \leq \beta^2,\quad \text{ and } \quad \frac{\dist(y^{k+1},C)}{\dist(y^k,C)} \leq \frac{1+\beta}{2}.
    \end{equation}
The limits of the above inequalities (with $k\to \infty$), together with \Cref{def.LinearConvergence}, yield that the sequences $\{\dist(s^k, C)\}$ and $\{\dist(y^k,C)\}$ converge {Q-linearly} to $0$, with asymptotic constants given by $\beta^2$ and $\frac{1+\beta}{2}$, respectively, where $\beta= \sqrt{1-(\frac{\omega}{\epsilon})^2}$, $\omega$ is the constant in \cref{eq.EB2}, and $\epsilon$ is the constant in \Cref{lema.CorollaryOfEB2}.

    Remind that $\{s^k\}$ and $\{y^k\}$ are Fej\'er monotone with respect to $C$ (by \Cref{lem.Propery_Projection}(ii) and \Cref{lem.Fejer_monotonicity_SiPM}, respectively). Hence, the result follows from \Cref{lem.ConvergenceRate_Fejermonoton}.\end{proof}

    Now, we present a lemma that allows us to prove the linear convergence of Algorithm 3, under \cref{eq.EB2}. 
\begin{lemma}[Linear convergence of distance for s-cCRM (Algorithm 3) under EB2]
    \label[lemma]{lem.x}
    Suppose $C_1,C_2,\ldots,C_m\subset \re^n $ nonempty closed convex sets and assume $C\coloneqq \bigcap_{i=1}^m C_i \neq \emptyset$
    Assume that \cref{eq.EB2} at $\Bar{z} \in C $. 
    Let $B$ be a ball centered at $\Bar{z}$ and contained in $V$. Let $z\in B$ and define $\beta\coloneqq  \sqrt{1-(\frac{\omega}{\epsilon})^2}$, with $\omega$ being the constant in \cref{def.EB2}, and $\epsilon$ being the constant in \Cref{lema.CorollaryOfEB2}. Then,
\begin{equation}
\label{eq.x.1}
    \dist(T_{C_{r},C_{\ell}}(z),C) \leq \beta^2 \dist(z,C),  
\end{equation}
for all $z\in B$. 
\end{lemma}
\begin{proof}
    By \cref{eq.LinearConvergence_SiPM_SePM_EB1.5}, we know that $Z_{C_{r},C_{\ell}}(z) \in B$. Using the definition of $\Bar{Z}_{C_{r},C_{\ell}}$, we have
\begin{align}
        \dist(\Bar{Z}_{C_{r},C_{\ell}}(z),C) & = \dist\left(\frac{1}{2}[Z_{C_{r},C_{\ell}}(z)+P_{C_{\ell}} (Z_{C_{r},C_{\ell}}(z))],C\right) \\
        &  \leq \frac{1}{2}\dist(Z_{C_{r},C_{\ell}}(z),C) +\frac{1}{2}\dist(P_{C_{\ell}} (Z_{C_{r},C_{\ell}}(z)),C)  \\
        & \leq \frac{1}{2}\beta^2 \dist(z,C)+ \frac{1}{2}\beta^2 \dist(z,C)\\
        & = \beta^2 \dist(z,C),\label{eq.x.2}
\end{align}
where the first inequality follows from the convexity of the distance function, and the second from \Cref{lem.Propery_Projection}(iv) and \cref{eq.LinearConvergence_SePM_SiPM_EB2_distanceSequence.1}. Combining \cref{eq.x.2} and \cref{eq.Linear_convergence_cCRM_distanceSequecn_EB1.6}, we establish the result.\end{proof}

We finalize this section stating and proving the linear convergence of s-cCRM with the most violated constraint control sequence (function value version) as in \cref{eq.MostViolated2}, under EB2. 

\begin{theorem}[Linear convergence of s-cCRM (Algorithm 3) under EB2]
\label{thm.LinearConvergence_s-cCRM_MostViolated2}
Let $C_1,\ldots,C_m\subset \re^n$ be  nonempty closed convex sets, and suppose that $C\coloneqq \bigcap_{i=1}^m C_i \neq \emptyset$. Let the sequence $\{z^k\}$ be generated by s-cCRM with the most violated constraint control sequence (function value version) as in \cref{eq.MostViolated2}, starting from some $z^0\in \re^n$. Assume also that $\{z^k\}$ is an infinite sequence. If \cref{eq.EB2} holds at the limit $\Bar{z}$ of $\{z^k\}$, then $\{z^k\}$ converges to $\Bar{z}\in C$ R-linearly, with asymptotic constant bounded above $\beta^2$, where $\beta= \sqrt{1-(\frac{\omega}{\epsilon})^2}$, $\omega$ is the constant in \cref{def.EB2}, and $\epsilon$ is the constant in \Cref{lema.CorollaryOfEB2}.

\end{theorem}
\begin{proof}
    Convergence of $\{z^k\}$ to a point $\Bar{z} \in C$ follows from \Cref{thm.Convergence_MostViolated2}. Hence, for large enough $k$, $z^k$ belongs to the ball centered at $\Bar{z}$ and contained in $V$, whose existence is ensured in \cref{eq.EB2}.

We recall that the s-cCRM sequence is defined as $z^{k+1}=T_{C_{r(k)},C_{\ell(k)}}(z^k)$, so that it follows from \Cref{lem.x} that
\begin{equation}
\label{eq.LinearConvergence_cCRM_MostViolated2.1}
    \frac{\dist(z^{k+1},C)}{\dist(z^{k},C)} \leq \beta^2.
\end{equation}
Since $\beta^2\in(0,1)$, it follows immediately from \cref{eq.LinearConvergence_cCRM_MostViolated2.1} that the scalar sequence $\{\dist(z^k,C)\}$ converges Q-linearly to zero with asymptotic constant bounded above by $\beta^2$.

Finally, recall that the sequence $\{z^k\}$ is Fej\'er monotone with respect to $C$, due to \Cref{cor.Fejermonotonicity_cCRM_m-sets_CFP.1}. The R-linear convergence of $\{z^k\}$ to some point in $C$ and the value of the upper bound of the asymptotic follow from \Cref{lem.ConvergenceRate_Fejermonoton}.\end{proof}

\subsection{Superlinear convergence of s-cCRM}

In this subsection we prove superlinear convergence of Algorithms 2 and 3 versions of s-cCRM, assuming a Slater condition and a smoothness assumption: the boundaries of the sets $C_i$ are
differentiable manifolds (of codimension $1$, due to the Slater condition) near the limit of the sequence.
First, we need a lemma about differentiable manifolds.
\begin{lemma}[Dimension of differentiable manifolds boundaries {\cite[Thm.~24.3]{Munkres:1997}}]
    \label[lemma]{lem.Dimension_Manifold}
    Let $M$ be a $k$-dimensional manifold in $\re^n$, of class $\mathcal{C}^p$. If the boundary of $M$, $\bound(M)$, is nonempty, then $\bound(M)$ is a $(k-1)$-dimensional manifold without boundary in $\re^n$, of class $\mathcal{C}^p$.
\end{lemma}

Now we can demonstrate the superlinear convergence of s-cCRM. We begin by the most violated constraint control sequence (distance version) as in \cref{eq.MostViolated1} (Algorithm 2).

\begin{lemma}[Superlinear convergence of the distance for s-cCRM (Algorithm 2)]
    \label{lem.Superlinear1}
    Let $C_1,\ldots,C_m\subset \re^n$ be  nonempty closed convex sets, and suppose that $C\coloneqq \bigcap_{i=1}^m C_i \neq \emptyset$. Let sequence the $\{z^k\}$ be generated by s-cCRM with the most violated constraint control sequence (distance version) as in \cref{eq.MostViolated1}, starting from some $z^0\in \re^n$, and converging to a point $\Bar{z}\in C$. Assume that the {interior} of $C$ is nonempty and that the boundaries of $C_i$ are differentiable manifolds in a neighborhood of $\Bar{z}$ for each $i=1,\ldots,m$. Then, the scalar sequence $\{\dist(z^k,C)\}$ converges to zero superlinearly.
\end{lemma}

\begin{proof}
    It is trivial if sequence $\{z^k\}$ is finite. Hence, let's assume that it is infinite.

    In order to prove the superlinear convergence rate, \emph{i.e.},
    \begin{equation}
        \label{eq.Superlear_Most1control1}
        \lim_{k \rightarrow \infty} \frac{\dist(z^{k+1},C)}{\dist(z^{k},C)}=\lim_{k \rightarrow \infty} \frac{\dist(T_{C_{r(k)},C_{\ell(k)}}(z^k),C)}{\dist(z^{k},C)}=0
    \end{equation}
    it suffices to show that
    \begin{equation}
        \label{eq.Superlear_Most1control2}
        \lim_{k \rightarrow \infty} \frac{\dist(T_{C_{r(k)},C_{\ell(k)}}(z^k),C)}{\dist(\Bar{Z}_{C_{r(k)},C_{\ell(k)}}(z^k),C)} = 0,
    \end{equation}
    because, by \cref{eq.SupplyToConvergenceAnalysis.2} and the nonexpansiveness of orthogonal projection, we know that $\dist(\Bar{Z}_{C_{r(k)},C_{\ell(k)}}(z^k),C) \leq \dist(z^k,C)$, so that \cref{eq.Superlear_Most1control1} follows from  \cref{eq.Superlear_Most1control2} immediately. 

    We claim that the assumption $\inte(C) \neq \emptyset$ implies \cref{eq.EB1}. Indeed, by Corollary 5.14 in~\cite{Bauschke:1996}, there exists $\hat{k}\in N$ such that
    \begin{equation}
    \label{eq.Superlear_Most1control3}
        \omega \dist(z^k,C) \leq \max_{1\leq i\leq m} \{\dist(z^k,C_i)\} = \dist(z^k,C_{\ell(k)}),
    \end{equation}
    for all $\hat{k} \geq k$.
    In addition, the nonemptyness of the interior of $C$, together with the hypothesis that the boundaries of $C_i$ are locally differentiable manifolds, proves that these manifolds have dimension $n-1$, in view of \Cref{lem.Dimension_Manifold}. 
    
 We remark that, when $M\subset \mathds{R}^n$ is a differentiable manifold of dimension  $n-1$, we have that if  $\bar z$ belongs to $M$ and $z\in \re^n$ lies on the tangent hyperplane to $M$ at $\bar z$, denoted here by $T_M(\bar z)$, then
\begin{equation}\label{v1}
\lim_{\substack{z\to\bar z \\ z\in T_M(\bar z) }}\frac{\dist(z,M)}{\lV z-\bar z\rV}=0.
\end{equation}
  This limit is discussed in more detail in \cite[Eq.~3.32]{Behling:2021}.
    
    Consider the $(n-1)$ dimensional hyperplanes $H_{C_{\ell(k)}}^k$ and $H_{C_{r(k)}}^k$, which are tangent to the manifolds, respectively at \[P_{C_{\ell(k)}}(\Bar{Z}_{C_{r(k)},C_{\ell(k)}}(z^k)) \text{ and } P_{C_{r(k)}}(\Bar{Z}_{C_{r(k)},C_{\ell(k)}}(z^k)).\] 
    Now, note that   $z^{k+1}$ is the circumcenter of $\{z^k, R_{C_{\ell(k)}}(z^k), R_{C_{r(k)}}(z^k)\}$. So, $z^{k+1}$ lies in the intersection of the bisectors passing by $P_{C_{\ell(k)}}(z^k) \in H_{C_{\ell(k)}}^k$, and $P_{C_{r(k)}}(z^k)\in H_{C_{r(k)}}^k$, respectively. Each bisector is contained in the hyperplane $H_{C_{\ell(k)}}^k$ or $H_{C_{r(k)}}^k$, and hence $z^{k+1} \in  H_{C_{\ell(k)}}^k \cap H_{C_{r(k)}}^k$. 
    Therefore,  we have, in view of \eqref{v1}, 
    \begin{equation}
        \label{eq.Superlear_Most1control4}
        \lim_{k \rightarrow \infty} \frac{\dist(z^{k+1},C_{\ell(k)})}{\|z^{k+1}-P_{C_{\ell(k)}}(\Bar{Z}_{\ell(k),r(k)}(z^k))\|}=0.
    \end{equation}

    Now, using the nonexpansiveness of projections onto $H_{C_{\ell(k)}}$ and $H_{C_{r(k)}}$, we get
    \begin{equation}
        \label{eq.Superlear_Most1control6}
        \|P_{H_{C_{\ell(k)}}}(z^{k+1}) - P_{H_{C_{\ell(k)}}}(\Bar{Z}_{C_{r(k)},C_{\ell(k)}}(z^k))\| \leq \| z^{k+1} -\Bar{Z}_{C_{r(k)},C_{\ell(k)}}(z^k)\|.  
    \end{equation}
    Since $z^{k+1}\in H_{C_{\ell(k)}}$ and $P_{H_{C_{\ell(k)}}}(\Bar{Z}_{C_{r(k)},C_{\ell(k)}}(z^k)) = P_{C_{\ell(k)}}(\Bar{Z}_{C_{r(k)},C_{\ell(k)}}(z^k))$, we get
    \begin{equation}
        \label{eq.Superlear_Most1control7}
        \|z^{k+1}-P_{C_{\ell(k)}}(\Bar{Z}_{C_{r(k)},C_{\ell(k)}}(z^k)) \|\leq \| z^{k+1} - \Bar{Z}_{C_{r(k)},C_{\ell(k)}}(z^k)\|.
    \end{equation}    
    By \Cref{lem.FirmlyNonexpansiveness_Cirmcum}, it holds that $\|z^{k+1} - \Bar{Z}_{C_{r(k)},C_{\ell(k)}}(z^k)\| \leq \dist(\Bar{Z}_{C_{r(k)},C_{\ell(k)}}(z^k),C)$, which combined with  \cref{eq.Superlear_Most1control7}, implies that
    \begin{equation}
    \label{eq.Superlear_Most1control9}
        \|z^{k+1}-P_{C_{\ell(k)}}(\Bar{Z}_{C_{r(k)},C_{\ell(k)}}(z^k)) \| \leq \dist(\Bar{Z}_{C_{r(k)},C_{\ell(k)}}(z^k),C).
    \end{equation}
    Thus, from \cref{eq.Superlear_Most1control4}, it follows that
    \begin{equation}
        \label{eq.Superlear_Most1control11}
        \lim_{k \rightarrow \infty} \frac{\dist(z^{k+1},C_{\ell(k)})}{\dist(\Bar{Z}_{C_{r(k)},C_{\ell(k)}}(z^k),C)}=0. 
    \end{equation}

    Moreover, \cref{eq.Superlear_Most1control3} yields 
    \begin{equation}
        \label{eq.Superlear_Most1control13}
        \omega \frac{\dist(z^{k+1},C)}{\dist(\Bar{Z}_{C_{r(k)},C_{\ell(k)}}(z^k),C)} \leq \frac{\dist(z^{k+1},C_{\ell(k)})}{\dist(\Bar{Z}_{C_{r(k)},C_{\ell(k)}}(z^k),C)}.
    \end{equation}
    Taking limits now as $k\rightarrow \infty$, we get \cref{eq.Superlear_Most1control2} and the proof is completed.\end{proof}

    The next key result allows us to use the superlinear rate of a scalar distance sequence to prove the superlinear rate of the underlying sequence.

    \begin{lemma}[Fejér monotonicity and superlinear convergence {\cite[Prop.~3.12]{Behling:2021}}]
           \label[lemma]{lem.Superlinear_convergence_FejerMonoton}
    Take a sequence $\{z^k\} \subset \re^n$ which is Fej\'er monotone with respect to the closed convex set $M\subset \re^n$. If the scalar sequence $\{\dist(z^k,M)\}$ converges superlinearly to $0$, then $\{z^k\}$ converges superlinearly to a point $\Bar{z}\in M$. 
    \end{lemma}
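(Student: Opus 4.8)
The plan is to mimic the strategy behind the R-linear statement of Lemma~\ref{lem.ConvergenceRate_Fejermonoton}, but to exploit the fact that a superlinear decay rate of the scalar residual absorbs any fixed multiplicative constant, so that the rate transfers directly to the iterates. First I would dispose of the trivial case: if $\dist(z^k,M)=0$ for some $k$, then $z^k\in M$ and Fej\'er monotonicity with $s=z^k$ forces $z^{k+1}=z^k$, so the sequence is eventually constant; hence I may assume $\dist(z^k,M)>0$ for all $k$, which is also what is needed to make sense of the hypothesized ratio.

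For the core argument, set $p^k:=P_M(z^k)$, so that $\dist(z^k,M)=\|z^k-p^k\|$. I would first record that $\{\dist(z^k,M)\}$ is nonincreasing: since $p^k\in M$,
\[
\dist(z^{k+1},M)=\|z^{k+1}-p^{k+1}\|\le\|z^{k+1}-p^k\|\le\|z^k-p^k\|=\dist(z^k,M),
\]
where the first inequality uses that $p^{k+1}$ is the nearest point of $M$ to $z^{k+1}$ and the second is Fej\'er monotonicity with $s=p^k$. Next, for $l\ge k$, Fej\'er monotonicity applied with $s=p^k$ gives $\|z^l-p^k\|\le\|z^k-p^k\|=\dist(z^k,M)$, whence
\[
\|p^l-p^k\|\le\|p^l-z^l\|+\|z^l-p^k\|\le\dist(z^l,M)+\dist(z^k,M)\le 2\dist(z^k,M).
\]
Because $\dist(z^k,M)\to 0$, the sequence $\{p^k\}$ is Cauchy and converges to some $\Bar{z}\in M$ ($M$ being closed); then $\|z^k-\Bar{z}\|\le\dist(z^k,M)+\|p^k-\Bar{z}\|\to 0$, so $z^k\to\Bar{z}$, consistent with Lemma~\ref{lem.Property_FejerMOnoton_sequence}\,(iii).

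The key quantitative step, which I expect to be the main obstacle, is the two-sided comparison between $\|z^k-\Bar{z}\|$ and $\dist(z^k,M)$. Letting $l\to\infty$ in the displayed bound gives $\|p^k-\Bar{z}\|\le 2\dist(z^k,M)$, so that
\[
\dist(z^k,M)\le\|z^k-\Bar{z}\|\le\|z^k-p^k\|+\|p^k-\Bar{z}\|\le 3\dist(z^k,M),
\]
the left inequality holding because $\Bar{z}\in M$. With this sandwich in hand the conclusion is immediate: since $\{\dist(z^k,M)\}$ converges superlinearly, i.e. $\dist(z^{k+1},M)/\dist(z^k,M)\to 0$, we obtain
\[
\frac{\|z^{k+1}-\Bar{z}\|}{\|z^k-\Bar{z}\|}\le\frac{3\,\dist(z^{k+1},M)}{\dist(z^k,M)}\longrightarrow 0 ,
\]
so $\{z^k\}$ converges superlinearly to $\Bar{z}\in M$. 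In contrast to the linear case, where the constant $3$ obstructs passing from R-linear to Q-linear convergence, here the vanishing of the residual ratio swallows this constant and even yields Q-superlinear convergence.
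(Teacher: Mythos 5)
Your proof is correct. Note that the paper does not actually prove this lemma---its ``proof'' is just a pointer to Proposition 3.12 of \cite{Behling.1}---so your argument is a self-contained replacement for the cited result, and it follows the standard line that the citation ultimately rests on: for a Fej\'er monotone sequence, once the limit $\bar z\in M$ is shown to exist, the iterate error $\|z^k-\bar z\|$ is comparable (up to a fixed constant) to the residual $\dist(z^k,M)$, and a superlinear scalar rate then transfers to the iterates because the vanishing ratio absorbs the comparison constant. Your handling of the degenerate case $\dist(z^k,M)=0$ is exactly what is needed for the Q-ratios to be well defined, and the left half of your sandwich guarantees $\|z^k-\bar z\|\ge \dist(z^k,M)>0$, so the final estimate is legitimate. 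One cosmetic improvement: the constant $3$ can be sharpened to the classical constant $2$ by letting $l\to\infty$ directly in the inequality $\|z^l-p^k\|\le \dist(z^k,M)$, which gives $\|\bar z-p^k\|\le \dist(z^k,M)$, rather than first bounding $\|p^l-p^k\|$; this is the usual bound for Fej\'er monotone sequences (see \cite{Bauschke.1}). Either way, the constant is immaterial for superlinearity. A final point worth keeping: your argument in fact delivers Q-superlinear convergence of $\{z^k\}$, which is stronger than the R-type transfer available in the linear-rate analogue (Lemma \ref{lem.ConvergenceRate_Fejermonoton}), precisely because the vanishing ratio swallows the constant that obstructs the Q-conclusion in the linear case.
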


Next, we present the superlinear convergence result for Algorithm 2. 

\begin{theorem}[Superlinear convergence of s-cCRM (Algorithm 2)]
\label{thm.Superlinear1}
Let $C_1,\ldots,C_m\subset \re^n$ be  nonempty closed convex sets, and suppose that $C\coloneqq\bigcap_{i=1}^m C_i$ is nonempty. Let $\{z^k\}$ be generated by s-cCRM with the most violated control sequence (distance version) as in  \cref{eq.MostViolated1}, starting from some $z^0\in \re^n$, and converging to a point $\Bar{z}\in C$. Assume that the interior of $C$ is nonempty, and that the boundary of $C_i$ is a differentiable manifold in a neighborhood of $\Bar{z}$ for each $i=1,\ldots, m$. Then, $\{z^k\}$ converges to $\Bar{z}$ superlinearly. 
\end{theorem}
\begin{proof}
    The result is a direct consequence of the Fej\'er monotonicity of $\{z^k\}$ with respect to $C$ given in \Cref{cor.Fejermonotonicity_cCRM_m-sets_CFP.1}, together with \Cref{lem.Superlinear1} and \Cref{lem.Superlinear_convergence_FejerMonoton}.
\end{proof}

In the next two results we prove the superlinear convergence of Algorithm 3 under the same assumptions.

\begin{lemma}[Superlinear convergence of the distance for s-cCRM (Algorithm 3)]
    \label[lemma]{lem.Superlinear2}
    Let $C_1,\ldots,C_m\subset \re^n$ be  nonempty closed convex sets, and suppose that $C\coloneqq \bigcap_{i=1}^m C_i \neq \emptyset$. Let sequence $\{z^k\}$ be generated by s-cCRM with the most violated constraint control sequence (function value version) as in \cref{eq.MostViolated2}, starting from some $z^0\in \re^n$, and converging to a point $\Bar{z}\in C$. Assume that the {interior} of $C$ is nonempty and that the boundaries of $C_i$ are differentiable manifolds in a neighborhood of $\Bar{z}$ for each $i=1,\ldots,m$. Also, assume that \cref{eq.EB2} holds at $\Bar{z}$. Then, the scalar sequence $\{\dist(z^k,C)\}$ converges to zero superlinearly.
\end{lemma}
\begin{proof}
    Assume that $\{z^k\}$ is infinite. Using the definition of \cref{eq.EB1} and the fact that $z^k\rightarrow \Bar{z}$, we conclude that there exists a positive integer $\hat{k}$ such that 
    \begin{equation}
     \label{eq.superlinear_theorem2.1}
        \dist (z^k ,C) \leq \frac{\omega}{\epsilon} \dist(z^k,C_{\ell(k)}),
    \end{equation}
    for all $k\geq \hat{k}$, where $\omega$ and $\epsilon$ are the constants in EB2. By \cref{eq.superlinear_theorem2.1}, we get 
    \begin{equation}
        \label{eq.superlinear_theorem2.2}
         \frac{\dist(z^{k+1},C)}{\dist(\Bar{Z}_{C_{r(k)},C_{\ell(k)}}(z^k),C)} \leq \frac{\omega \dist(z^{k+1},C_{\ell(k)})}{\epsilon \dist(\Bar{Z}_{C_{r(k)},C_{\ell(k)}}(z^k),C)}.
    \end{equation}
    Combining \cref{eq.Superlear_Most1control11} and \cref{eq.superlinear_theorem2.2}, we get \cref{eq.Superlear_Most1control2} which is sufficient to guarantee \cref{eq.Superlear_Most1control1}. Then the result holds.
\end{proof}

\begin{theorem}[Superlinear convergence of s-cCRM (Algorithm 3)]
\label{thm.Superlinear2}
Let $C_1,\ldots,C_m\subset \re^n$ be  nonempty closed convex sets, and suppose that $C\coloneqq\bigcap_{i=1}^m C_i$ is nonempty. Let $\{z^k\}$ be generated by s-cCRM with the most violated control sequence \cref{eq.MostViolated2} starting from some $z^0\in \re^n$, and converging to a point $\Bar{z}\in C$. Assume that the interior of $C$ is nonempty, and that the boundary of $C_i$ is a differentiable manifold in a neighborhood of $\Bar{z}$ for each $i=1,\ldots, m$. Also, assume that \cref{eq.EB1} holds at $\Bar{z}$. Then, $\{z^k\}$ converges to $\Bar{z}$ superlinearly. 
\end{theorem}
\begin{proof}
    The result is a direct consequence of the Fej\'er monotonicity of $\{z^k\}$ with respect to $C$ given in \Cref{cor.Fejermonotonicity_cCRM_m-sets_CFP.1}, together with \Cref{lem.Superlinear_convergence_FejerMonoton,lem.Superlinear2}.
\end{proof}

\section{Numerical Experiments}\label{sec:NumericalExperiments}
In this section, we present the results of the computational experiments  comparing s-cCRM with SePM (referred as \texttt{SePM})  and  CRM-Prod (presented in \cref{eq:CRMProd} and denoted by \texttt{CRMprod}). For s-cCRM we consider Algorithm 1 (denoted by \texttt{Alg1}) and Algorithm 3 (designated as \texttt{Alg3}). For \texttt{Alg1} we use the cyclic control sequence given in \cref{eq:cyclic_control}. In view of \Cref{lema.CorollaryOfEB2}, we do not present results concerning Algorithm 2, since Algorithm 2 is somehow equivalent to Algorithm 3 in the presence of error bound; see \cref{eq.CorollaryOfEB2.1}.

We apply the aforementioned four methods to the problem of finding a point in the intersection of $m$ ellipsoids, \emph{i.e.}, 
\begin{equation}
\label{eq.Ellipsoid1}
   \text{ find } x^* \in \bigcap_{i=1}^m \xi_i.
\end{equation}
Here, each ellipsoid $\xi_i$ is a set given by
\begin{equation}
\label{eq.Ellipsoid2}
    \xi_i \coloneqq  \{x\in \re^n\mid f_i(x) \leq 0\},\text{ for }i=1,2,\ldots,m
\end{equation}
with $f_i:\re^n\rightarrow \re$ defined as
\begin{equation}
    f_i(x) = \scal{x}{A_ix}+2\scal{x}{b^i} - c_i.
\end{equation}
We consider $A_i$ a symmetric positive definite matrix, $b^i$  a vector, and $c_i$  a positive scalar, for each $i=1,\ldots,m$.

To construct the ellipsoids we follow the steps of \cite{Behling:2021b}. First, we form the ellipsoid $\xi_1$ by generating a matrix $A_1$ of in the form of $A_1 = \gamma \Id+B_1^\top B_1$ with $B_1\in \re^{n\times n}$, $\gamma\in \re_{++}$. Matrix $B_1$ is sparse with sparsity density $p=2n^{-1}$, and with components sampled from the standard normal distribution. Vector $b^1$ is sampled from the uniform distribution in $[0,1]$,  and we enforce $\scal{b^1}{A_1 b^1}<c_1 $, which ensures that $0$ belongs to $\xi_1$. 

Then the remaining ellipsoids, $\xi_2,\ldots,\xi_m$, are constructed in the following form:
\begin{equation}
    \xi_i = \{x\in \re^n\mid \scal{x-x_c^i}{(A_i^\top A_i)^{-1}(x-x_c^i)} \leq 1\},
\end{equation}
where $A_i$ is a positive definite matrix, and $x_c^i\in \re^n$ is the center of $\xi_i$ for $i=2,3,\ldots,m$. To form $\xi_2$, first randomly generate $x_c^2$ of $\xi_2$ outside $\xi_1$. Define $d_2:=\lambda(P_{\xi_1}(c_2)-c_2)$ as the norm of the longest principal semi-axis of $\xi_2$, where $\lambda>1$ is a constant which can decide the intersection is big or small. For ensuring this, we form a diagonal matrix $\Lambda_2=\diag(\|d_2\|,u)$ where $u\in \re^{n-1}$ is a vector whose components are positive and have values less than $\|d_2\|$, and orthogonal matrix $Q_2$ where the first row or column is $\frac{d_2}{\|d_2\|}$. Define $A_2=Q_2 \Lambda_2 Q_2^\top$, and then the ellipsoid $\xi_2$ is complete.

Before forming the remaining ellipsoids, we need to find a fixed point that lies in the intersection of the $m$  ellipsoids. Take point $p = x_c^2 + d_2 \in \xi_1 \cap \xi_2$, and we will guarantee $p\in \xi_i$ for each $i=3,4,\ldots,m$ in the following steps.


Choose an arbitrary point $x_c^i \in \re^n$ which doesn't belong to $\cup_{j=1}^{i-1} \xi_j$, define $d^i=\lambda(p-x_c^i)$ as the norm of the longest semi-axis of $\xi_i$, and generate $A_i$ similarly as was done for $\xi_2$ for each $i=3,\ldots,m$. Repeat this process until we get all $m$ ellipsoids.


The computational experiments were performed on an Intel Xeon W-2133 3.60GHz with 32GB of RAM running Ubuntu 20.04 using  \texttt{Julia  v1.8}~\cite{Bezanson:2017}, and are available at \url{https://github.com/lrsantos11/CRM-CFP}.
The following conditions were used:
\begin{listi}
    \item A random initial point $x^0\in \re^n$ is sampled for the standard normal distribution, ensuring that $x^0\notin \xi_i$, for all $i=1,\ldots,m$; note that for  \texttt{CRMprod}, the initial point is  $(x^0,x^0,\ldots,x^0) \in \re^{nm}$;
    \item   We use the method described in \cite[Alg.~6]{Jia:2017} to compute the projections onto the ellipsoids.  The number of projections onto ellipsoids per iteration that each algorithm requires differs:  \texttt{Alg1} involves $\num{4}$, \texttt{Alg3} asks for  $5$, while both \texttt{SePM} and  \texttt{CRMprod} demand $m$. 
    \item We compare total number of projections until achieve precision and not number of iterations, and also register CPU time (in seconds) for each algorithm.
    \item A limit of \num{30000} total number of projections is enforced.
    \item After each iteration, we calculate the \emph{current error} given by
    \begin{equation}
        e_k=\sum_{i=1}^m \|P_{C_i}(x^k)-x^k\|,
    \end{equation}
    and we establish the \emph{stopping criterion} as
    \begin{equation}
        \max\{\|x^{k+1}-x^k\| ,  e_k \} \leq \varepsilon,
    \end{equation}
    where $\varepsilon= \num{e-6}$.
    
    \item For each pair $(n,m)$, for $n\in \{20,50,100\}$ and $m\in \{5, 10, 20\}$ we repeat the experiment \num{20} times.  
 
\end{listi}

\Cref{tab.Ellipsoid_iteration,tab.Ellipsoid_Time} summarize the results, in which we exhibit the mean and the standard deviation of CPU running time (in seconds) and total number of projections, respectively, for each algorithm. We also sum up our numerical findings in \Cref{pic.1,pic.2}, by means of the so-called  performance profiles from~\cite{Dolan:2002}. \emph{Performance profiles} allow one to benchmark different methods on a set of problems with respect to a performance measure (in our case, CPU Time and total number of projections). The vertical axis indicates the percentage of problems solved, while the horizontal axis indicates the corresponding factor of the performance index used by the best solver.

We briefly comment on these results. Our numerical findings show that \texttt{Alg1} and  \texttt{Alg3} are faster (in terms of CPU time) than their counterparts (see \Cref{tab.Ellipsoid_Time} and \Cref{pic.1}). \texttt{Alg3}  
is the one with the less number of total projections to achieve the required tolerance, which is expected, as it uses only functional evaluation to determine the control sequence (see \Cref{tab.Ellipsoid_iteration} and \Cref{pic.2}). We remark that \texttt{Alg1} and \texttt{Alg3} perform similarly in terms of CPU time, and when the dimension are higher, the difference between their performance with respect to \texttt{SePM} and \texttt{CRMprod} is more evident.

\begin{table}[htpb]
    \caption{The \texttt{mean} $\pm$ \texttt{std} CPU time (in seconds) comparison per dimension and number of sets.}
\label{tab.Ellipsoid_Time}
\centering 
\begin{tabular}{rr
    S[round-mode = uncertainty, round-precision = 3, table-number-alignment=center,table-format = 1.4+-1.4]
    S[round-mode = uncertainty, round-precision = 3, table-number-alignment=center,table-format = 2.4+-1.4]
    S[round-mode = uncertainty, round-precision = 3, table-number-alignment=center,table-format = 2.4+-1.4]
    S[round-mode = uncertainty, round-precision = 3, table-number-alignment=center,table-format = 2.4+-1.4]}
\toprule 
$n$  & $m$  &   \texttt{Alg1}         & \texttt{Alg3} & \texttt{SePM} & \texttt{CRMprod} \\
\midrule 
20  &  5  &   0.0198124  +-    0.038853   &      0.0368  +-     0.1171   &    0.0333974 +-   0.0584851 &    0.0350442 +-   0.057604 \\
20  & 10  &   0.00982177 +-    0.0102313  &      0.0094 +-     0.0092 &    0.0191617 +-   0.0232425 &    0.0335212 +-   0.035002 \\
20  & 20  &   0.0096 +-    0.0087 &      0.0134539  +-     0.0111134  &    0.0202698 +-   0.0242554 &    0.0595656 +-   0.069896 \\
50  &  5  &   0.0417429  +-    0.0280462  &      0.0497845  +-     0.0587683  &    0.0686484 +-   0.0531841 &    0.0704168 +-   0.050617 \\
50  & 10  &   0.0372108  +-    0.02933    &      0.0421624  +-     0.0412113  &    0.0679019 +-   0.0595938 &    0.0784705 +-   0.075930 \\
50  & 20  &   0.0371539  +-    0.023343   &      0.0411425  +-     0.0233964  &    0.0676161 +-   0.0546071 &    0.212343  +-   0.191711 \\
100  &  5  &   0.383882   +-    0.610646   &      0.353618   +-     0.612096   &    0.637482  +-   0.909212  &    0.532929  +-   0.950897 \\
100  & 10  &   0.359866   +-    0.61953    &      0.360117   +-     0.624696   &    0.623179  +-   0.929969  &    0.776991  +-   0.989221 \\
 100  &   20  & 
 \sisetup{round-mode = uncertainty, round-precision = 4, table-number-alignment=center,table-format = 2.2+-3.2}\num{60.4449     +-  268.584}   &   
 \sisetup{round-mode = uncertainty, round-precision = 4, table-number-alignment=center,table-format = 2.2+-3.2}\num{76.0533     +-   338.5}    &  
 \sisetup{round-mode = uncertainty, round-precision = 4, table-number-alignment=center,table-format = 2.2+-3.2}\num{73.7023    +- 326.609}    &  
 \sisetup{round-mode = uncertainty, round-precision = 4, table-number-alignment=center,table-format = 3.2+-3.2}\num{173.824     +-    772.267}     \\
\bottomrule
\end{tabular}
\end{table}

\begin{figure}[htpb]
    \centering
    {\includegraphics[width=.8\textwidth]{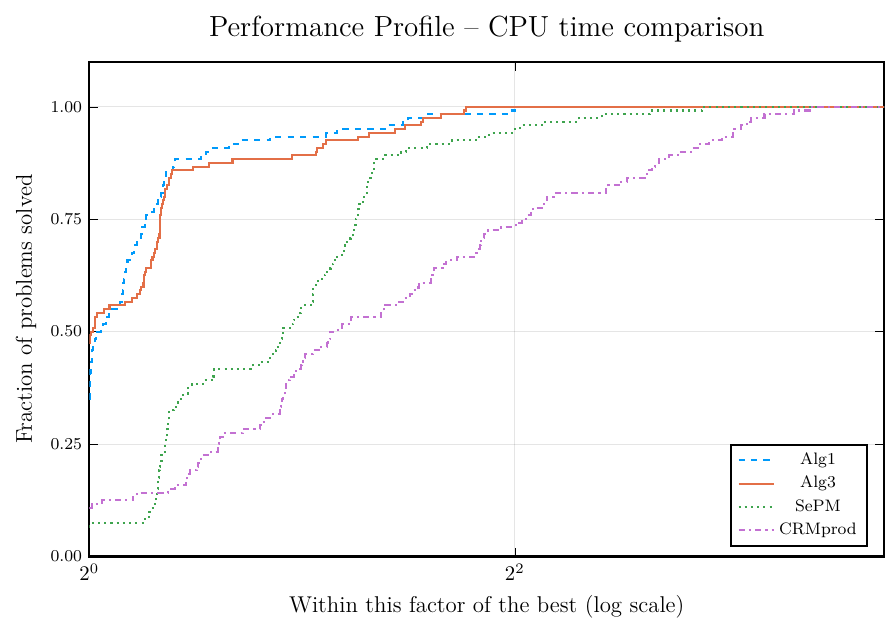}}
    \caption{Performance profile of the experiments considering  CPU Time (in seconds).}
    \label{pic.1}
   \end{figure}

\begin{table}[htpb]
    \caption{The \texttt{mean} $\pm$ \texttt{std} of total projections  per dimension and number of sets.}
\label{tab.Ellipsoid_iteration}
\centering 
\begin{tabular}{rrS[round-mode = uncertainty, round-precision = 3,table-format = 3.2 +- 2.2, table-number-alignment=center]
                  S[round-mode = uncertainty, round-precision = 3,table-format = 2.3 +- 2.2, table-number-alignment=center]
                  S[round-mode = uncertainty, round-precision = 4,table-format = 2.3 +- 3.2, table-number-alignment=center]
                  S[round-mode = uncertainty, round-precision = 4,table-format = 2.3 +- 3.2, table-number-alignment=center]}
\toprule
$n$  & $m$  &   \texttt{Alg1}         & \texttt{Alg3} & \texttt{SePM} & \texttt{CRMprod} \\
   \midrule
   20 &   5  &    51.0 +-   13.7267  &   13.5  +-   4.00657  &   32.0  +-    26.0263  &       66.0 +-     53.178 \\ 
   20 &  10  &   100.0 +-   30.4354  &   13.0  +-   4.4129   &   64.0  +-    65.486   &      183.5 +-    224.013 \\ 
   20 &  20  &   196.0 +-   54.9066  &   13.5  +-   4.00657  &  121.0  +-   120.608   &      588.0 +-    876.966 \\ 
   50 &   5  &    51.0 +-   12.0961  &   14.25 +-   4.37547  &   34.0  +-    25.3709  &       65.5 +-     49.335 \\ 
   50 &  10  &   102.0 +-   27.4533  &   14.0  +-   4.47214  &   66.0  +-    54.9066  &      142.5 +-    186.375 \\ 
   50 &  20  &   208.0 +-   54.4446  &   14.5  +-   4.26121  &  142.0  +-   116.781   &      827.0 +-    1016.34 \\ 
  100 &   5  &    58.0 +-   14.3637  &   15.25 +-   4.12789  &   45.5  +-    26.102   &       66.0 +-     60.166 \\ 
  100 &  10  &   120.0 +-   34.3358  &   15.5  +-   4.26121  &   96.5  +-    64.4225  &      255.5 +-    272.155 \\ 
  100 & 20   & 
  \sisetup{round-mode = uncertainty, round-precision = 5}  \num{764.0  +- 2381.61}  &    
  \sisetup{round-mode = uncertainty, round-precision = 5}  \num{57.75 +- 188.864}   &    
  \sisetup{round-mode = uncertainty, round-precision = 5}  \num{683.0  +- 2176.26}  &  
  \sisetup{round-mode = uncertainty, round-precision = 6}  \num{3005.0  +- 10336.0} \\
\bottomrule
\end{tabular}
\end{table}

\begin{figure}[htpb]
    \centering
{\includegraphics[width=.8\textwidth]{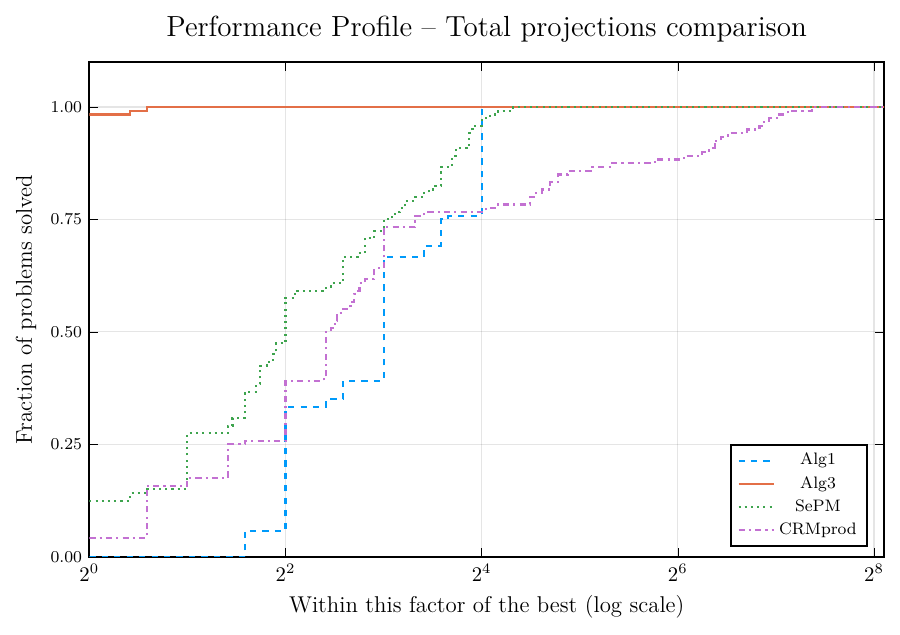}}
\caption{Performance profile of the experiments considering total number of projections.}
 \label{pic.2}
\end{figure}

\section{Concluding remarks}

In this paper we have presented the successive centralized circumcentered-reflection method (s-cCRM) extending cCRM for solving the multiset convex feasibility problem. 
Our theoretical analysis encompasses  the global convergence of s-cCRM and, additionally under an error bound condition, we prove linear convergence of the method. Moreover, we have shown that the s-cCRM is superlinearly convergent under smoothness of the boundaries of the target sets. Furthermore, the numerical experiments illustrate the proposed version of s-cCRM  have better performance than SePM and CRM-Prod. Extensions of novel circumcenter-based iterations by using more natural centralized procedures and possible applications to structured optimization problems are left for future research.


\backmatter

\bmhead{Acknowledgments}

The authors would like to thank the anonymous referees for their valuable comments and suggestions that helped to improve the quality of the paper. The authors also thank the  Brazilian agencies \emph{Conselho Nacional de Desenvolvimento Cient\'ifico e Tecnológico} (CNPq), and \emph{Fundação de Amparao à Pesquisa do Estado do Rio de Janeiro} (FAPERJ), as well as the United States agency \emph{National Science Foundation} (NSF) for their financial support. \textbf{RB} was partially supported by the CNPq Grants 304392/2018-9 and 429915/2018-7, and FAPERJ Grant E-26/201.345/2021; \textbf{YBC} was partially supported by the  NSF Grant DMS-2307328, and by an internal grant from NIU. \textbf{LRS} was partially supported by CNPq Grant 113190/2022-0.

\section*{Declarations}

\bmhead{Data availability} The data and code that support the findings of this study are fully available at \url{https://github.com/lrsantos11/CRM-CFP} or can be obtained from the corresponding author upon request.

\bmhead{Conflict of interest} The authors have no relevant financial or non-financial interests to disclose. 

\bmhead{Ethical statement} We certify that all authors are complying with the journal's ethical policies and that this manuscript has not been published or submitted simultaneously for publication elsewhere.



\bibliography{refs}

\end{document}